\documentclass[a4paper, reqno, 14pt]{amsart}

\usepackage[usenames,dvipsnames]{color}
\usepackage{amsthm,amsfonts,amssymb,amsmath,amsxtra}
\usepackage[all]{xy}
\SelectTips{cm}{}
\usepackage{xr-hyper}
\usepackage[colorlinks=
citecolor=Black,
linkcolor=Red,
urlcolor=Blue]{hyperref}

\usepackage{verbatim}

\usepackage[margin=1.25in]{geometry}

\usepackage{mathrsfs}

\usepackage{diagbox}

\usepackage{tikz}

\usepackage{tkz-euclide}

\usepackage{setspace} 

\onehalfspacing 

\RequirePackage{xspace}
\RequirePackage{etoolbox}
\RequirePackage{varwidth}
\RequirePackage{enumitem}
\RequirePackage{tensor}
\RequirePackage{mathtools}
\RequirePackage{longtable}
\RequirePackage{multirow}

\usepackage{tikz}
\usepackage{tikz-cd}
\usepackage{csquotes}

\usepackage[url=false,doi=false,isbn=false,sorting=nyt,giveninits=true,maxbibnames=4]{biblatex}
\bibliography{references.bib}

\setcounter{tocdepth}{1}

\def\ge{\geqslant}
\def\le{\leqslant}
\def\a{\alpha}
\def\b{\beta}
\def\g{\gamma}
\def\G{\Gamma}
\def\d{\delta}

\def\o{\omega}

\def\s{\sigma}
\def\t{\tau}

\def\k{\kappa}
\def\l{\lambda}
\def\z{\zeta}

\def\i{^{-1}}

\def\<{\langle}
\def\>{\rangle}

\newcommand{\bJ}{\mathbf J}
\newcommand{\de}{{\mathrm{def}}}

\newcommand{\bH}{\mathbf H}
\newcommand{\bG}{\mathbf G}

\newcommand{{\BG}}{\ensuremath{\mathbb {G}}\xspace}

\newcommand{{\BK}}{\ensuremath{\mathbb {K}}\xspace}

\newcommand{\BN}{\ensuremath{\mathbb {N}}\xspace}

\newcommand{\BQ}{\ensuremath{\mathbb {Q}}\xspace}
\newcommand{\BR}{\ensuremath{\mathbb {R}}\xspace}
\newcommand{\BS}{\ensuremath{\mathbb {S}}\xspace}

\newcommand{\BZ}{\ensuremath{\mathbb {Z}}\xspace}

\newcommand{\CA}{\ensuremath{\mathcal {A}}\xspace}

\newcommand{\CI}{\ensuremath{\mathcal {I}}\xspace}

\newcommand{\CK}{\ensuremath{\mathcal {K}}\xspace}

\newcommand{\Ad}{{\mathrm{Ad}}}
\newcommand{\ad}{{\mathrm{ad}}}
\newcommand{\indec}{{\mathrm{indec}}}

\newcommand{\leng}{{\mathrm{length}}}

\DeclareMathOperator{\Adm}{Adm}

\newcommand{\GL}{\mathrm{GL}}

\newcommand{\id}{\ensuremath{\mathrm{id}}\xspace}

\def\tW{\tilde W}
\def\tS{\tilde \BS}


%
\newtheorem{theorem}{Theorem}
\newtheorem{proposition}[theorem]{Proposition}
\newtheorem{lemma}[theorem]{Lemma}

\newtheorem{corollary}[theorem]{Corollary}

\theoremstyle{definition}
\newtheorem{definition}[theorem]{Definition}
\newtheorem{example}[theorem]{Example}
\newtheorem*{example*}{Example}

\newtheorem{remark}[theorem]{Remark}

\newtheorem*{function*}{Function}

\numberwithin{equation}{section}
\numberwithin{theorem}{section}


\setitemize[0]{leftmargin=*,itemsep=\the\smallskipamount}
\setenumerate[0]{leftmargin=*,itemsep=\the\smallskipamount}

\renewcommand{\to}{%
   \ifbool{@display}{\longrightarrow}{\rightarrow}%
   }
\let\shortmapsto\mapsto
\renewcommand{\mapsto}{%
   \ifbool{@display}{\longmapsto}{\shortmapsto}%
   }
\newlength{\olen}
\newlength{\ulen}
\newlength{\xlen}
\newcommand{\xra}[2][]{%
   \ifbool{@display}%
      {\settowidth{\olen}{$\overset{#2}{\longrightarrow}$}%
       \settowidth{\ulen}{$\underset{#1}{\longrightarrow}$}%
       \settowidth{\xlen}{$\xrightarrow[#1]{#2}$}%
       \ifdimgreater{\olen}{\xlen}%
          {\underset{#1}{\overset{#2}{\longrightarrow}}}%
          {\ifdimgreater{\ulen}{\xlen}%
             {\underset{#1}{\overset{#2}{\longrightarrow}}}
             {\xrightarrow[#1]{#2}}}}%
      {\xrightarrow[#1]{#2}}
   }
\makeatother
\newcommand{\xyra}[2][]{%
   \settowidth{\xlen}{$\xrightarrow[#1]{#2}$}%
   \ifbool{@display}%
      {\settowidth{\olen}{$\overset{#2}{\longrightarrow}$}%
       \settowidth{\ulen}{$\underset{#1}{\longrightarrow}$}%
       \ifdimgreater{\olen}{\xlen}%
          {\mathrel{\xymatrix@M=.12ex@C=3.2ex{\ar[r]^-{#2}_-{#1} &}}}%
          {\ifdimgreater{\ulen}{\xlen}%
             {\mathrel{\xymatrix@M=.12ex@C=3.2ex{\ar[r]^-{#2}_-{#1} &}}}
             {\mathrel{\xymatrix@M=.12ex@C=\the\xlen{\ar[r]^-{#2}_-{#1} &}}}}}%
      {\mathrel{\xymatrix@M=.12ex@C=\the\xlen{\ar[r]^-{#2}_-{#1} &}}}%
   }
\makeatletter
\newcommand{\xla}[2][]{%
   \ifbool{@display}%
      {\settowidth{\olen}{$\overset{#2}{\longleftarrow}$}%
       \settowidth{\ulen}{$\underset{#1}{\longleftarrow}$}%
       \settowidth{\xlen}{$\xleftarrow[#1]{#2}$}%
       \ifdimgreater{\olen}{\xlen}%
          {\underset{#1}{\overset{#2}{\longleftarrow}}}%
          {\ifdimgreater{\ulen}{\xlen}%
             {\underset{#1}{\overset{#2}{\longleftarrow}}}
             {\xleftarrow[#1]{#2}}}}%
      {\xleftarrow[#1]{#2}}
   }
\newcommand{\isoarrow}{%
   \ifbool{@display}{\overset{\sim}{\longrightarrow}}{\xrightarrow\sim}%
   }

\begin{document}

\title[]{Zero-dimensional affine Deligne--Lusztig varieties} 
\author[Xuhua He]{Xuhua He}
\address{Department of Mathematics and New Cornerstone Science Laboratory, The University of Hong Kong, Pokfulam, Hong Kong, Hong Kong SAR, China}
\email{xuhuahe@hku.hk}

\author[Sian Nie]{Sian Nie}
\address{Academy of Mathematics and Systems Science, Chinese Academy of Sciences, Beijing 100190, China, and, School of Mathematical Sciences, University of Chinese Academy of Sciences, Chinese Academy of Sciences, Beijing 100049, China}
\email{niesian@amss.ac.cn}

\author[Qingchao Yu]{Qingchao Yu}
\address{Department of Mathematics and New Cornerstone Science Laboratory, The University of Hong Kong, Pokfulam, Hong Kong, Hong Kong SAR, China}
\email{qingchao\_yu@outlook.com}
\thanks{}

\keywords{Affine Deligne--Lusztig varieties}
\subjclass[2010]{11G25, 20G25}


\begin{abstract}
In this paper, we study the affine Deligne--Lusztig variety $X(\mu,b)_K$ and classify all quadruples $(\bG, \mu, b, K)$ with $\dim X(\mu, b)_K=0$. This question was first asked by Rapoport in 2005, who also made an explicit conjecture in the hyperspecial level. We prove that $\dim X(\mu,b)_K=0$ if and only if, up to certain Hodge-Newton decomposition condition, the pair $(\bG, \{\mu\})$ is of extended Lubin-Tate type. We also give a combinatorial description of this condition by the essential gap function on $B(\bG)$ and the $\mu$-ordinary condition for the generic Newton stratum.

\end{abstract}

\maketitle


\section*{$Introduction$}

\subsection{Motivation} The notion of affine Deligne--Lusztig variety was introduced by Rapoport in \cite{Ra}. It plays an important role in arithmetic geometry and the Langlands program. In this paper, we consider the affine Deligne--Lusztig variety
$$X(\mu, b)_K  =\{g \breve \CK\mid g \i b \s(g) \in \breve \CK \Adm(\{\mu\}) \breve \CK\}.$$
We refer to \S\ref{sec:3.2} for the notation used here. It is known (see \cite{He16}) that $X(\mu, b)_K \neq \emptyset$ if and only if the $\s$-conjugacy class of $b$ lies in the neutral acceptable set $B(\bG, \{\mu\})$. It is a perfect closed subscheme of the partial affine flag variety $\breve G/\breve \CK$ in the sense of \cite{BS} and \cite{Zhu}.


In the mixed characteristic case, the set $X (\mu, b)_K$, associated with a Shimura datum $(\bG, \{\mu\})$, arises as the set of geometric points of the underlying reduced scheme of a Rapoport-Zink formal moduli space of $p$-divisible groups (see \cite[\S 4]{RV}). In the equal characteristic case, the set $X(\mu, b)_K$ arises from the formal moduli space of Shtukas (see \cite{Vi18}). In this case, there is no restriction on the pair $(\bG, \{\mu\})$. 


The set $B(\bG,\{\mu\})$ admits a natural partial order. It has a unique minimal element, the basic $\s$-conjugacy class in $B(\bG,\{\mu\})$. It is shown in \cite{HN2018} that $B(\bG,\{\mu\})$ also has a unique maximal element. When $(\bG, \{\mu\})$ is a Shimura datum, $X(\mu, b)_K$ for basic $b$ (resp. maximal $b$) is the group-theoretic model of the basic locus (resp. maximal locus) of the corresponding Shimura variety with parahoric level $K$. When $\bG$ is quasi-split, the maximal locus is the $\mu$-ordinary locus.

The formal moduli spaces of $p$-divisible groups and Shtukas, in general, are very complicated. However, it has been known for decades that in the Lubin-Tate case, the underlying reduced scheme (i.e. $X (\mu, b)_K$) is zero-dimensional. 

It is a natural and interesting question to classify all quadruples $(\bG, \mu, b, K)$ such that $\dim X (\mu, b)_K=0$. This question was first asked by Rapoport in 2005, who also made an explicit conjecture in the hyperspecial level. See \cite[Page 5]{Vi05}. Rapoport's conjecture was verified for $\bG=\GL_n$ by Viehmann in loc.cit. For other groups, it follows from the explicit dimension formula of affine Deligne--Lusztig varieties with hyperspecial level structures, established for split groups by G\"ortz, Haines, Kottwitz, Reuman in \cite{GHKR06} and Viehmann in \cite{Vi06}, and for quasi-split groups by Hamacher in \cite{Ham15b} and by X. Zhu in \cite{Zhu}.

For other parahoric level structure, it is a challenging problem to determine the dimension of $X (\mu, b)_K$. We refer to \cite{He-ICM} and \cite{HY} for the discussion of the results in this direction. Note that in most known cases, there is an essential ``gap'' between $\mu$ and $b$ and the associated affine Deligne--Lusztig variety is not zero dimensional. In particular, the known dimension formulas are not very useful in the classification problem for $\dim X (\mu, b)_K=0$.

The recent work by G\"{o}rtz, Rapoport and the first author \cite{GHR} classifies the cases that $\dim X(\mu, b)_K = 0$ when $b$ is a basic element. It is shown that when $b$ is basic, $X (\mu, b)_K$ is 0-dimensional if and only if $(\bG, \{\mu\})$ is the extended Lubin-Tate case. The key idea is that when $(\bG, \{\mu\})$ is not the Lubin-Tate case, then one can construct an explicit curve in $X(\mu, b)_K$. This construction relies on the condition that $b$ is basic. A priori, it is more difficult to construct an explicit curve in $X(\mu, b)_K$ when $[b]$ is ``close'' to $\mu$.

\subsection{Main result} The main purpose of this paper is to give a classification of the cases $\dim X(\mu, b)_K = 0$. We state our main result. 

\begin{theorem} \label{main-thm}
The following conditions are equivalent

(1) $\dim X(\mu, b)_K = 0$;

(2) The datum $(\bG, \{\mu\})$ is of extended Lubin-Tate type after applying the Hodge-Newton decomposition;

(3) The maximum element $[b_{\mu,\max}]$ of $B(\bG,\{\mu\})$ is $\mu$-ordinary and the essential gap between $[b]$ and $[b_{\mu,\max}]$ is zero.
\end{theorem}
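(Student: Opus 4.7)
The plan is to prove the theorem by establishing $(2) \Leftrightarrow (3)$ as a combinatorial reformulation and then closing the loop via $(2) \Rightarrow (1) \Rightarrow (3)$. For $(2) \Leftrightarrow (3)$, I would argue purely inside $B(\bG, \{\mu\})$: starting from an extended Lubin-Tate datum obtained after Hodge-Newton decomposition, the maximal element $[b_{\mu,\max}]$ on each HN-factor is automatically $\mu$-ordinary, because on each factor the Newton point of the top stratum meets the Kottwitz point, and the essential gap collapses because each factor's Newton poset has the Lubin-Tate shape, in which $[b]$ is rigidly determined by its position. Conversely, $\mu$-ordinarity of $[b_{\mu,\max}]$ together with vanishing of the essential gap pins down enough combinatorial data on each indecomposable HN-factor to force the extended Lubin-Tate shape. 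Concretely, I would work with the parametrization of $B(\bG, \{\mu\})$ by Newton and Kottwitz points, exploit the essential gap function already introduced in the paper, and invoke the classification of HN-indecomposable pairs $(\bG, \{\mu\})$ whose Newton poset is Lubin-Tate-like.

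For $(2) \Rightarrow (1)$, the Hodge-Newton decomposition gives a corresponding reduction of $X(\mu, b)_K$ to affine Deligne-Lusztig varieties attached to Levi subgroups in which $b$ becomes basic and the datum is genuinely of extended Lubin-Tate type. The main result of \cite{GHR} then applies factor by factor and yields dimension $0$, and since the Hodge-Newton reduction preserves dimension this suffices.

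The real content is $(1) \Rightarrow (3)$, which I would attack in two steps after first reducing, via Hodge-Newton, to an HN-irreducible situation. First, assuming $\dim X(\mu,b)_K = 0$, I would show that $[b_{\mu,\max}]$ must be $\mu$-ordinary: a failure of $\mu$-ordinarity would, through the known dimension formula for the generic Newton stratum (as in the hyperspecial results of \cite{GHKR06}, \cite{Vi06}, \cite{Ham15b}, \cite{Zhu} and their parahoric refinements discussed in the introduction), already exhibit a positive-dimensional contribution. Second, I would show that the essential gap between $[b]$ and $[b_{\mu,\max}]$ must vanish; this is the step demanding new ideas. The strategy is: whenever the gap is positive, I must produce a one-parameter family of points in $X(\mu,b)_K$. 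I would choose an intermediate class $[b']$ witnessing the gap, apply a Hodge-Newton-style reduction adapted to the pair $([b],[b'])$, and then adapt the $\BG_m$-orbit construction of \cite{GHR} inside the stratum corresponding to $[b']$, transferring the resulting curve back into the $[b]$-stratum by a suitable specialization.

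The principal obstacle, as anticipated in the introduction, is precisely this curve construction outside the basic case. In \cite{GHR} one exploits basicness of $b$ to produce an explicit $\breve \CK$-coset family inside $\breve \CK \Adm(\{\mu\}) \breve \CK$ whose $\s$-conjugacy class is constantly $[b]$. When $b$ is not basic the Newton symmetry is broken, and naive one-parameter families risk either escaping the admissible set or drifting into a different $\s$-conjugacy class. The hard part will be to show that a positive essential gap delivers exactly the room inside $\breve \CK \Adm(\{\mu\}) \breve \CK$ needed to carry a non-constant family whose $\s$-conjugacy class remains $[b]$; in other words, the essential gap function has to be interpreted geometrically as a numerical witness for the existence of such curves, bridging the combinatorics of $B(\bG)$ and the geometry of $X(\mu, b)_K$.
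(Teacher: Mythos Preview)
Your logical skeleton $(2)\Rightarrow(1)$ and $(3)\Rightarrow(2)$ is fine and matches the paper, but your plan for $(1)\Rightarrow(3)$ goes in the wrong direction. You propose to adapt the explicit curve construction of \cite{GHR} to non-basic $[b]$, and you correctly identify this as the principal obstacle; in fact the paper's introduction singles out exactly this difficulty as the reason the GHR strategy does not extend. The paper does \emph{not} overcome this obstacle. Instead it bypasses it entirely via the purity theorem for Newton stratifications (de Jong--Oort, Vasiu, Viehmann, Hamacher): one works on the admissible set $X=\bigsqcup_{w\in\Adm(\{\mu\})}\breve\CK w\breve\CK$, where topological strong purity forces every irreducible component of $X\cap[b]$ to sit at codimension at most $\leng([b],[b_{\mu,\max}])$ inside some irreducible component of $X$. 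Comparing relative dimensions yields the clean inequality
\[
\dim X(\mu,b)_K \ \ge\ \langle\underline\mu-\nu_{\mu,\max},2\rho\rangle + \text{Ess-gap}([b],[b_{\mu,\max}]),
\]
from which $(1)\Rightarrow(3)$ is immediate. No curve needs to be exhibited; the essential gap is interpreted geometrically not as a witness for a specific $\BG_m$-family but as a codimension bound coming from purity.

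A smaller point: your $(2)\Rightarrow(1)$ invokes \cite{GHR} after Hodge--Newton reduction, but this is unnecessary. Once each Levi factor is of extended Lubin--Tate type, the corresponding $X^M(\mu_P,b_P)_{K_M}$ is zero-dimensional by the classical Lubin--Tate computation itself; the paper's proof simply says this follows from Theorem~\ref{thm:HN-decom} and does not appeal to \cite{GHR}. Likewise, for $(3)\Rightarrow(2)$ the paper does not rely on an external classification of HN-indecomposable pairs with Lubin--Tate-like Newton poset; it argues directly that Ess-gap $=0$ forces $\mathbf b_1=\mathbf i=0$, hence $[b_{\mu,\indec}]$ has no lattice points, hence is superbasic, hence equals $[b_{\mu,\mathrm{basic}}]$, and then the fully-HN-decomposable plus superbasic-basic combination is read off the table in \cite{GHN2}.
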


Note that the $\mu$-ordinary $\s$-conjugacy class, if exists, must be the maximal element $[b_{\mu, \max}]$ of $B(\bG,\{\mu\})$. However, in general $[b_{\mu, \max}]$ may not be $\mu$-ordinary. It is known that for quasi-split groups, $[b_{\mu, \max}]$ is $\mu$-ordinary. We refer to \S\ref{sec:mu-ordinary} for the classification of pairs $(\bG, \{\mu\})$ for the non quasi-split groups such that $[b_{\mu, \max}]$ is $\mu$-ordinary.

The key ingredients of the proof consist of: 

\begin{itemize}
    \item Hodge-Newton decomposition of $X(\mu, b)_K$;  
    \item Purity theorem; 
    \item Essential gap function on $B(\bG )$. 
\end{itemize}

The Hodge-Newton decomposition for $X(\mu, b)_K$ was first established in the hyperspecial case by Kottwitz \cite{K03}. For the arbitrary parahoric level, the Hodge-Newton decomposition was proved in \cite{GHN2}. Roughly speaking, when $(\mu, [b])$ is Hodge-Newton decomposable, the Hodge-Newton decomposition gives an isomorphism for the affine Deligne--Lusztig variety $X(\mu, b)$ for $\bG$ to a disjoint union of affine Deligne--Lusztig varieties\footnote{For hyperspecial level, only one Levi subgroup is involved. However, several Levi subgroups are needed in other cases of level structure.} for various semi-standard Levi subgroups of $\bG$. In particular, the Hodge-Newton decomposition gives the implication (2) $\Rightarrow$ (1).

The purity theorem was first established by de Jong and Oort \cite{dJO} for $F$-isocrystals. Namely, it says that the complement $\overline S - S$ of a Newton stratum $S$ is pure of codimension $\le 1$ in the closure $\overline S$. The purity theorem was later strengthened and extended to $F$-isocrystals with $G$-structure by Vasiu \cite{Va}, Yang \cite{Ya}, Viehmann \cite{Vi13}, and Hamacher \cite{Ham17}. Applying the purity theorem, we obtain in Proposition \ref{prop:dimxmub} an explicit lower bound for $\dim X(\mu, b)$. In particular, if $\dim X(\mu, b) = 0$, then condition (3) of Theorem \ref{main-thm} holds.
Finally, by a purely combinatorial argument, we obtain the implication (3) $\Rightarrow$ (2). 

In this paper, we introduce the essential gaps among elements in $B(\bG)$. This is a key ingredient in our proof and is also of independent interest. It is known that $B(\bG)$ is a ranked poset. Thus, we may estimate the distance between $[b]$ and $[b']$ in $B(\bG )$ with $[b] \le [b']$ by the length of some/any maximal chain from $[b]$ to $[b']$. On the other hand, to each $[b] \in B(\bG )$ we associate its Newton vector $\nu_b$, which is a rational dominant coweight. We may define another distance function between $[b]$ and $[b']$ as $\<\nu_{b'} - \nu_b, 2\rho\>$, where $\rho$ is the half sum of positive roots. It is easy to see that $\<\nu_{b'} - \nu_b, 2\rho\> \ge \leng_{\bG}([b], [b'])$. We define the essential gap between $[b]$ and $[b']$ as \[\<\nu_{b'} - \nu_b, 2\rho\> - \leng_{\bG}([b], [b']).\]

Finally, we would like to point out the following consequences 
\begin{itemize}
    \item The classification of cases $\dim X(\mu, b)_K = 0$ is independent of the choice of the parahoric level $K$.
    
    \item If $\dim X(\mu, b)_K = 0$, then $\dim X(\mu, b')_K = 0$ for all $[b'] \in B(\bG,\{\mu\})$ such that $[b] \le [b']$.
\end{itemize}

\smallskip

\noindent {\bf Acknowledgment: } The use of Pick's Theorem in \S\ref{sec:2.2} is partially motivated by the work of Lim \cite{Lim}. We thank Michael Rapoport for his valuable comments on an earlier version of this paper. We also thank Felix Schremmer for helpful discussions and comments. XH is partially supported by the New Cornerstone Science Foundation through the New Cornerstone Investigator Program and the Xplorer Prize, and by Hong Kong RGC grant 14300220. 

\section{Preliminary}

\subsection{The reductive group.}\label{sec:1.1}
Let $F$ be a non-archimedean local field and let $\bG$ be a connected reductive group over $F$. We set $\breve G=\bG(\breve F)$, where $\breve F$ is the completion of the maximal unramified extension $F^{\mathrm{un}}$ of $F$. Let $\G$ be the Galois group of $\bar{F}$ over $F$ and $\G_0$ be the Galois group of $\bar{F}$ over $F^{\mathrm{un}}$. Let $\s$ be the Frobenius morphism. 

Let $S$ be a maximal $\breve F$-split torus of $\bG$ defined over $F$. Let $T$ be the centralizer of $S$ in $\bG$. By Steinberg’s theorem, $\bG$ is quasi-split over $\breve F$. Then $T$ is a maximal torus. We denote by $N_T$ the normalizer of $T$. Let $W=N_T(\breve F)/T(\breve F)$ be the (relative) Weyl group. Let $X_*(T)$ be the cocharacter group of $T$ and let $\pi_1(\bG)$ be the fundamental group of $\bG$.

Let $B\supseteq T$ be a Borel subgroup of $\bG_{\breve F}$ over $\breve F$ and contains $T$. Let $V=X_*(T)_{\G_0} \otimes \BR = X_*(S)\otimes\BR$ and let $V_+$ be the dominant chamber in $V$ determined by $B$. Let $\BS$ be the set of simple reflections in $W$ corresponding to $V_+$.

Let $\CA$ be the apartment of $\bG_{\breve F}$ corresponding to $S$. We fix a $\s$-stable alcove $\mathfrak{a}$ of $\CA$. By choosing a special vertex of $\mathfrak{a}$, we identify $\CA$ with $V$ such that $\mathfrak{a}$ lies in the dominant chamber $V_+$. Note that $\s$ does not necessarily preserve the special vertex. The action of $\s$ on $\CA$ induces an affine transformation on $V$. View $\s$ as an element in $\text{Aff}(V)=V \rtimes \GL(V)$ and let $p(\s)$ be the image of $\s$ under projection $\text{Aff}(V) \to \GL(V)$. Following \cite[\S2.1]{GHN2}, the $L$-action of $\s$ on $V$ is defined as $\s_0=w \circ p(\s)$, where $w$ is the unique element in $W$ such that $\s_0(V_+)=V_+$. Then $\s_0$ acts on $W$ and stabilizes $\BS$. Let $\BS/\<\s_0\>$ be the set of $\s_0$-orbits on $\BS$. For any $\l\in X_*(T)$, denote by $\underline \l $ the image of $\l$ in $X_*(T)_{\G_0}$ and by $\l^{\diamond}$ the average of the $\s_0$-orbit of $\underline\l$.

Let $\Phi$ be the relative root system of $\bG$ over $\breve F$. Let $\Sigma$ be the reduced root system associated with $\Phi$ as in \cite[\S1.7]{Tits1979}. We have a natural pairing $\BR\Sigma\times V \rightarrow \BR$. For any $i\in\BS$, let $\a_i\in \Sigma$, $\a_i^{\vee}\in V$ and $\omega_i \in \BR\Sigma$ be the corresponding simple root, simple coroot and fundamental weight of $\Sigma$ respectively. For any $\s_0$-orbit $o$ on $\BS$, set $\o_o=\sum_{i \in o} \omega_i$. Let $\rho$ be the half-sum of all positive roots in $\Sigma$. We have $\rho = \sum_{o\in\BS/\<\s_0\>}\o_o$.

Throughout this paper, all parabolic subgroups and Levi subgroups of $\bG$ are defined over $\breve F$. Let $J$ be a subset of $ \BS$. Denote by $P_J$ (resp. $M_J$) the corresponding standard parabolic (resp. Levi) subgroup of $\bG$. Denote by $W_J$ the subgroup of $W$ generated by element in $J$. For any Levi subgroup $M$ which contains $T$, set $W_{M} =\left( N_T(\breve F)\cap M(\breve F) \right)/T(\breve F)$, the Weyl group of $M$.

\subsection{The $\s$-conjugacy classes}\label{sec:1.2} The $\s$-conjugation action on $\breve G$ is defined by $g \cdot_\s g'=g g' \s(g) \i$. For $b \in \breve G$, we denote by $[b]$ the $\s$-conjugacy class of $b$. Let $B(\bG)$ be the set of $\s$-conjugacy classes of $\breve G$. 
The \emph{defect} of a $\s$-conjugacy class $[b]\in B(\bG)$ is defined as 
$$\de(b) = \mathrm{rank}_F(\bG)-\mathrm{rank}_F(\bJ_b),$$
where $\bJ_b$ is the $\s$-centralizer of $b$ and $\bJ_b(F)=\{g \in \breve G\mid g^{-1} b \s(g) =b\}$.

In \cite{Kot85} and \cite{Kot97}, Kottwitz gives a classification of the $\s$-conjugacy classes of $\breve G$. Let $\k:B(\bG)\rightarrow \pi_1(\bG)_{\G}$ be the Kottwitz map. Any $\s$-conjugacy class $[b]$ is determined by the two invariants: the element $\k(b)$ and the Newton point $\nu_b \in V_+^{\s_0}$.

The set $B(\bG)$ has a natural partial order: $[b_1]\le [b_2]$ if and only if $\k(b_1) = \k(b_2) $ and $\nu_{b_2} - \nu_{b_1}$ is the sum of positive coroots with non-negative coefficients. By \cite[Theorem 7.4 (iii)]{Chai}, the poset $B(\bG)$ is ranked, i.e., for any $[b_1], [b_2]\in B(\bG)$ such that $[b_1]\le[b_2]$, any maximal chains from $[b_1]$ to $[b_2]$ in $B(\bG)$ have the same length. We denote the common length by $\mathrm{length}([b_1],[b_2])$.

Let $\bG_{\ad}$ be the adjoint group of $\bG$. We have the natural map $B(\bG)\rightarrow B(\bG_{\ad})$. By \cite[Proposition 4.10]{Kot97}, the following commutative diagram is Cartesian

\begin{align*}
\xymatrix{B(\bG)\ar[d]^{\k}  \ar[r] &B(\bG_{\ad})\ar[d]^{\k_{\bG_{\ad}}}\\
\pi_1(\bG)_{\G}  \ar[r]&\pi_1(\bG_{\ad})_{\G},    }
\end{align*}
here $\k_{\bG_{\ad}}$ is the Kottwitz map of $\bG_{\ad}$.

We say that a $\s$-conjugacy class $[b] $ is basic if the Newton point $\nu_b$ is central. We say that a $\s$-conjugacy class $[b] $ is superbasic if $\de(b) = \sharp(\BS/\<\s_0\>)$.

In the rest of this subsection, we assume that $\bG$ is adjoint. Let $\bH$ be the quasi-split inner form of $\bG$ and let $\s_{\bH}$ be the Frobenius morphism on $\bH(\breve F)$. According to \cite[\S 2.3]{GHN1}, we can identify $\bG(\breve F)$ with $\bH(\breve F)$ such that $\s = \Ad(\g)\circ \s_{\bH}$ for some $\g\in N_T(\breve F)\subseteq \bG(\breve F)$. Here $\Ad(\g)$ is the map $g\mapsto \g g\g^{-1}$. Since $\Ad(\g)$ preserves the maximal torus $T$, we can identify $T$ with a maximal torus $T_{\bH}$ of $\bH$ over $F$. Note that $\pi_1(\bG)$ and $\pi_1(\bH)$ are equal as the $\G$-module. The induced action of $\s_\bH$ on $V$ is the same as $\s_0$.

For any $b'\in \bH(\breve F)$, denote by $[b']_{\bH}$ the $\s_\bH$-conjugacy class of $b'$. There is a natural bijection 
\begin{align*}\tag{1.1}\label{eq:1.1}
B(\bG)  \xrightarrow{\sim} B(\bH)    
\end{align*}
sending $[b]$ to $[b\g]_{\bH}$. Let $\k_{\bH}$ and $\nu^{\bH}$ be the Kottwitz map and Newton map of $\bH$ respectively. Let $[b]\in B(\bG)$. By \cite[\S 4.18]{Kot97}, we have $\nu_b =\nu^{\bH}_{b\g}$. By \cite[\S 4.9]{Kot97}, the following diagram is commutative
\begin{align*}
\xymatrix{
 B(\bG)\ar[d]^{\k}  \ar[r]^{(\ref{eq:1.1})} &B(\bH)\ar[d]^{\k_{\bH}}\\
\pi_1(\bG)_{\G}  \ar[r]^{\cdot \k(\g)}                 &\pi_1(\bH)_{\G}.               }
\end{align*}

For any $\l\in X_*(T ) $, we denote by $\k(\l) $ the natural image of $\l$ in $\pi_1(\bG )_{\G}$. Let $[b_1], [b_2]\in B(\bG)$ such that $[b_1]\le[b_2]$. Let $\l$ be an arbitrary dominant cocharacter in $X_*(T)$ such that $\k(\l) = \k(b)\cdot \k(\g)$ and $\nu_{b_2}\le \l^{\diamond}$ (such $\l$ always exists). By \cite[Theorem 7.4 (iv)]{Chai}, \cite[Proposition 3.11]{Ham15} and \cite[Theorem 3.4]{Viehmann2015} for quasi-split groups and the bijection (\ref{eq:1.1}), we have 
\begin{align*}\tag{1.2}\label{eq:1.2}
\leng([b_1],[b_2]) &= \sum_{o\in \BS/\<\s_0\>} \lceil \< \underline\l -\nu_{b_1} ,\o_o\>\rceil  - \lceil \< \underline\l  -\nu_{b_2} ,\o_o\>\rceil \\
&= \<\nu_{b_2}-\nu_{b_1},\rho\>+\frac{1}{2}\de(b_1)-\frac{1}{2}\de(b_2),
\end{align*}
where the notation $\lceil a\rceil$ means the minimal integer $\ge a$.

\subsection{The best integral approximation}\label{sec:1.3}

In this subsection, we assume that $\bG$ is adjoint and quasi-split over $F$. Then $\s=\s_0$.

Hamacher and Viehmann introduce the notion \textit{``best integral approximation"} of $\s$-conjugacy classes for unramified groups \cite[Lemma/Definition 2.1]{HV}. The definition also works for quasi-split groups. For any $[b] \in B(\bG)$, the best integral approximation $\l([b])$ is defined as the unique maximal element in $ X_*(T )_{\G } $ such that $ \k(\l ([b ]  ) ) = \k ( b  ) \in  \pi_1(\bG)_{\G }  $ and $\l ( [b ] ) \le \nu_{b } $ via the identification $ (X_*(T )_{\G_0})_{\s}  \otimes \BQ \cong ( X_*(T )_{\G_0}  \otimes \BQ ) ^{\s}\subseteq V$. 

By \cite[Proposition 3.9 (iv)]{Schremmer2022_newton}, for any $[b]\in B(\bG)$, we have 
\begin{align*}\tag{1.3}\label{eq:1.3}
\de(b) = \sharp\{o\in \BS/\<\s_0\> \mid \<\nu_b,\o_o\> \ne \<\l([b]),\o_o\> \}.  
\end{align*}

Let $[b]\in B(\bG)$ and $o\in \BS/\<\s_0\>$. Let $\l\in X_*(T )$ be an arbitrary cocharacter such that $\k(\l) = \k(b)$. By the proof of \cite[Lemma/Definition 2.1]{HV}, we have, 
\begin{align*}\tag{1.4}\label{eq:1.4}
\<\nu_b,\o_o\> = \<\l([b]),\o_o\> \iff  \< \underline\l - \nu_b, \o_o\>\in\BZ. 
\end{align*}

For non quasi-split adjoint groups, we can define the best approximations of $\s$-conjugacy classes via the best integral approximations for the quasi-split inner forms and the natural isomorphism (\ref{eq:1.1}). In particular, (\ref{eq:1.3}), (\ref{eq:1.4}) also hold for non-quasi-split adjoint groups.
\subsection{The acceptable elements}\label{sec:1.4}

Let $\{\mu\}$ be a conjugacy class of cocharacters of $\bG$. Let $\mu\in X_*(T)^+$ be a
dominant representative of the conjugacy class $\{\mu\}$. Recall that for any $\l\in X_*(T)$, we denote by $\underline \l $ the image of $\l$ in $X_*(T)_{\G_0}$ and denote by $\l^{\diamond}$ the average of the $\s_0$-orbit of $\underline\l$.

The set of \emph{neutrally acceptable} elements for $\mu$ is defined as 
$$B(\bG,\{\mu\}) = \{[b]\in B(\bG) \mid \k(b) = \k(\mu), \nu_b \le \mu^{\diamond}\}.$$

It is easy to see that the set $B(\bG,\{\mu\})$ has the unique minimal element, which is the unique basic element $[b_{\mu, \mathrm{basic}}]$ in $B(\bG,\{\mu\})$. 

By definition, a $\s$-conjugacy class $[b]\in B(\bG,\{\mu\})$ is called \emph{$\mu$-ordinary} if $\nu_b = \mu^{\diamond}$.

It is shown in \cite[Theorem 2.1]{HN2018} that the set $B(\bG,\{\mu\})$ contains a unique maximal element $[b_{\mu,\max}]$. If $\s=\s_0$, then $[b_{\mu,\max}]$ is $\mu$-ordinary. If $\s\ne\s_0$, then this may not be the case.

For any $v\in V$, let $I(v) = \{i \in \BS\mid \<v,\a_i\>=0\} $. Let $[b]\in B(\bG,\{\mu\})$ and $J$ be a $\s_0$-stable subset of $\BS$. We say that $(\mu, [b])$ is \textit {Hodge-Newton decomposable} with respect to the standard Levi subgroup $M_J$ if $I(\nu_b) \subseteq J$ and $\mu^{\diamond} - \nu_{b} \in \sum_{j \in J} \BR_{\ge0} \a_j^\vee$.  We say that $(\mu,[b])$ is \textit {Hodge-Newton indecomposable}, if $(\mu, [b])$ is not Hodge-Newton decomposable with respect to $M_J$ for any $\s_0$-stable proper subset $J$ of $\BS$. The basic element $[b_{\mu,\mathrm{basic}}]$ is Hodge-Newton indecomposable. By \cite[\S 4.1]{HNY}, there is a unique maximal Hodge-Newton indecomposable element $[b_{\mu,\indec}]$.

We denote by $\nu_{\mu,\mathrm{basic}}$, $\nu_{\mu,\max}$ and $\nu_{\mu,\indec}$ the Newton point of $[b_{\mu,\mathrm{basic}}]$, $[b_{\mu,\max}]$ and $[b_{\mu,\indec}]$ respectively. 

Let $T_{\ad}$ be the image of $T$ in $\bG_{\ad}$ and $\mu_{\ad}$ be the image of $\mu$ in $X_*(T_{\ad})$. By \cite[\S6.5]{Kot97}, the natural map $B(\bG)\rightarrow B(\bG_{\ad})$ induces a bijection $B(\bG,\{\mu\})\cong B(\bG_{\ad},\{\mu_{\ad}\})$. For any $[b]\in B(\bG)$, denote by $[b_{\ad}]$ its image in $B(\bG_{\ad})$. If $[b]$ is $\mu$-ordinary, then $[b_{\ad}]$ is $\mu_{\ad}$-ordinary. If $(\mu,[b])$ is Hodge-Newton decomposable, then so is $(\mu_{\ad},[b_{\ad}])$.

\section{The essential gap function}

\subsection{Definition}\label{sec:2.1}
The goal of this section is to introduce the essential gap function on $B(\bG)$. Let $[b_1],[b_2]\in B(\bG)$ such that $ [b_1]\le [b_2]$. Define
   \begin{align*}
        \text{Ess-gap}([b_1],[b_2]) &= \<\nu_{b_2}-\nu_{b_1},2\rho\> - \mathrm{length}([b_1],[b_2]).
    \end{align*} 
By (\ref{eq:1.2}), we have
\begin{align*}
    \text{Ess-gap}([b_1],[b_2]) &= \mathrm{length}([b_1],[b_2])-\de(b_1)+\de(b_2) \\
                                &= \<\nu_{b_2} - \nu_{b_1},\rho\>-\frac{1}{2}\de(b_1)+\frac{1}{2}\de(b_2).
\end{align*}
It is easy to see that the essential gap function satisfies the additive property
$$\text{Ess-gap}([b_1],[b_2])+\text{Ess-gap}([b_2],[b_3])=\text{Ess-gap}([b_1],[b_3])$$
for $[b_1],[b_2],[b_3]\in B(\bG)$ with $[b_1]\le [b_2]\le [b_3]$.

\begin{lemma}\label{lem:2.1}
We have $\text{Ess-gap}([b_1],[b_2]) \in \BZ_{\ge0}$.
\end{lemma}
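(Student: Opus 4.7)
The idea is to unwind the definition of the essential gap using formulas (\ref{eq:1.2})--(\ref{eq:1.4}) so that it becomes a manifestly non-negative sum of integers. The first step would be to reduce to the adjoint case: the bijection $B(\bG)\to B(\bG_{\ad})$ coming from the cartesian square in \S\ref{sec:1.2} preserves the partial order and hence the rank function $\mathrm{length}$; Newton points project naturally and the pairing with $\rho$ is unchanged (since $\rho$ is already a character of the adjoint torus); and the defect is preserved because passing to the adjoint quotient drops both $\mathrm{rank}_F(\bG)$ and $\mathrm{rank}_F(\bJ_b)$ by $\mathrm{rank}_F Z(\bG)$.

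Once $\bG$ is adjoint, I would choose a dominant $\lambda\in X_*(T)$ with $\k(\lambda)=\k(b_1)\cdot\k(\g)=\k(b_2)\cdot\k(\g)$ and $\nu_{b_2}\le \lambda^{\diamond}$, as in the setup of (\ref{eq:1.2}), and set $x_o = \<\underline{\lambda}-\nu_{b_1},\o_o\>$ and $y_o = \<\underline{\lambda}-\nu_{b_2},\o_o\>$ for each $o\in\BS/\<\s_0\>$. Formula (\ref{eq:1.2}) then gives $\mathrm{length}([b_1],[b_2])=\sum_o(\lceil x_o\rceil-\lceil y_o\rceil)$, while (\ref{eq:1.3})--(\ref{eq:1.4}) yield $\de(b_1)=\#\{o:x_o\notin\BZ\}$ and $\de(b_2)=\#\{o:y_o\notin\BZ\}$.

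The key observation is the elementary identity $\lceil x\rceil-\mathbf{1}_{x\notin\BZ}=\lfloor x\rfloor$, valid for any real $x$. Applied term-by-term it collapses the formula to
\[
\text{Ess-gap}([b_1],[b_2]) \;=\; \mathrm{length}([b_1],[b_2])-\de(b_1)+\de(b_2) \;=\; \sum_{o\in\BS/\<\s_0\>}\bigl(\lfloor x_o\rfloor-\lfloor y_o\rfloor\bigr).
\]
Since $\nu_{b_2}-\nu_{b_1}$ is a non-negative combination of simple coroots and $\o_o$ is dominant, $x_o\ge y_o$ for each $o$, so every summand is a non-negative integer and the claim follows. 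The main point requiring care is the reduction to the adjoint case; after that, the argument is a direct unwinding of the definitions.
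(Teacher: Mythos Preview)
Your argument is correct but takes a different route from the paper. The paper's proof is shorter and more qualitative: it uses the additive property of $\text{Ess-gap}$ to reduce to the case $\mathrm{length}([b_1],[b_2])=1$, and then simply observes from (\ref{eq:1.2}) that $\<\nu_{b_2}-\nu_{b_1},2\rho\>$ is a positive integer, hence $\ge 1$. You instead unwind (\ref{eq:1.2})--(\ref{eq:1.4}) directly via the identity $\lceil x\rceil-\mathbf{1}_{x\notin\BZ}=\lfloor x\rfloor$ to obtain the explicit ``floor formula'' $\text{Ess-gap}([b_1],[b_2])=\sum_o(\lfloor x_o\rfloor-\lfloor y_o\rfloor)$, each summand being a non-negative integer. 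What your approach buys is a term-by-term non-negativity statement, which is in fact the same computation the paper carries out later in the proof of Proposition~\ref{prop:ige0} (rephrased there in terms of $\mathbf{i}$ and $\mathbf{b_1}$); so your proof of the lemma essentially anticipates that proposition. The paper's approach buys brevity and avoids the reduction to the adjoint group at this stage (though the reduction is noted immediately afterward and is indeed straightforward).
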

\begin{proof}
By the above additive property, it suffices to consider the case $\mathrm{length}([b_1],[b_2])=1$. By (\ref{eq:1.2}), $\<\nu_{b_2}-\nu_{b_1},2\rho\>\in\BZ$. Since $[b_2]>[b_1]$, we have $\<\nu_{b_2}-\nu_{b_1},2\rho\>>0$. Hence $\<\nu_{b_2}-\nu_{b_1},2\rho\>\ge1$. Then $\text{Ess-gap}([b_1],[b_2])\in\BZ_{\ge0} $ as desired.
\end{proof}

It is easy to see that the essential gap function is preserved under the natural map $B(\bG)\rightarrow B(\bG_{\ad})$.
\subsection{The $\GL_n$ case.}\label{sec:2.2}
In this subsection, we give a geometric interpretation of the essential gap function for the group $\GL_n$.  

In this case, we identify the coweight lattice $X_*(T)$ with $\BZ^n$. Let $[b]$ be a $\s$-conjugacy class. Write $\nu_b = (a_1,a_2,\ldots,a_n)$, $a_1\ge a_2\ge\cdots\ge a_n$. Following \cite[\S 1]{Chai}, the Newton polygon of $\nu_b$ is defined as the graph of the piecewise-linear function on the interval $[0,n]$ with slopes $(a_1,a_2,\ldots,a_n)$ and starts with the origin $(0,0)$. We identify $\nu_b$ with its Newton polygon.

By definition, a lattice point is a point in $\mathbb{R}^2$ with integral coordinates. Let $[b_1]<[b_2] $ be two $\s$-conjugacy classes with Newton points $\nu_{  1}, \nu_{ 2}$. Then $\nu_{ 1}$ lies below $\nu_{ 2}$. Let $P_{12}$ be the region between $\nu_1$ and $\nu_2$. Pick's theorem \cite{Pick} states that for the polygon $P_{12}$ (whose vertices are lattices), we have
    $$2\mathbf A = 2\mathbf i + \mathbf {b_1} + \mathbf {b_2},$$
where
    $$\mathbf A = \text{Area of }P_{12},$$
    $$\mathbf i = \text{number of lattice points in the interior of }P_{12},$$
    $$\mathbf{b_1} = \text{number of lattice points on the Newton polygon of } \nu_1 \text{ but not on the Newton polygon of }\nu_2,$$
    $$\mathbf{b_2} = \text{number of lattice points on the Newton polygon of } \nu_2 \text{ but not on the Newton polygon of }\nu_1.$$
Using (\ref{eq:1.2}), one can easily see that $\mathrm{length}([b_1],[b_2]) = \mathbf i + \mathbf {b_2}$. Note also that $\mathbf{A} = \<\nu_{ 2}-\nu_{ 1},\rho\>$. Then Pick's theorem implies 
$$\text{Ess-gap}([b_1],[b_2]) = \mathbf i + \mathbf {b_1}.$$

\begin{example}\label{eg1}
Let $\nu_{b_1} = (\frac{5}{4},\frac{5}{4},\frac{5}{4},\frac{5}{4},\frac{1}{4},\frac{1}{4},\frac{1}{4},\frac{1}{4})$ and $\nu_{b_2} = (3,1,\frac{1}{2},\frac{1}{2},\frac{1}{2},\frac{1}{2},0,0)$. Then $\mathbf {b_1} = 0$, $\mathbf {b_2} = 4$, $\mathbf i = 3$, $\mathbf A = 5$. Note that the common lattice points $(0,0), (4,5)$ and $(8,6)$ are not counted in $\mathbf {b_1}$ and $\mathbf {b_2}$. See Figure 1.    
\end{example}

\begin{figure}
\centering

\begin{tikzpicture}
\tkzInit[xmax=8,ymax=7,xmin=0,ymin=0]
\tkzGrid
\draw[ultra thick] (0,0) -- (4,5) -- (8,6);
\draw[ultra thick] (0,0) -- (1,3) -- (2,4) -- (6,6) -- (8,6);


\end{tikzpicture}
\caption{}
\end{figure}

\subsection{The general case}\label{sec:2.3}
In this subsection, we assume that $\bG$ is adjoint. Let $\bH$ be the quasi-split inner form of $\bG$. As in \S\ref{sec:1.2}, we can identify $\bG(\breve F)$ with $\bH(\breve F)$ such that $\s=\Ad(\g)\circ \s_{\bH}$ on $\bG(\breve F)$ for some $\g\in N(\breve F)$. Recall that $V = X_*(T)_{\G_0}\otimes\BR$. For any $v\in V$, define $J(v) = \{o\in \BS/\<\s_0\> \mid \<v,\o_o\> =0\}$.


\begin{definition}
Let $[b_1],[b_2]\in B(\bG)$ with $[b_1]\le [b_2]$. Write $\nu_1=\nu_{b_1}$ and $\nu_2=\nu_{b_2}$. Define
\begin{gather*}
\mathbf {b_1}([b_1],[b_2]) = \sharp \bigl(\{o\in\BS/\<\s_0\> \mid \< \nu_{1},\o_o\> = \< \l([b_1]) ,\o_o\> \}-J(\nu_{ 2}-\nu_{ 1})\bigr), \\
\mathbf {b_2}([b_1],[b_2]) = \sharp \bigl(\{o\in\BS/\<\s_0\> \mid \< \nu_{2},\o_o\> = \< \l([b_2]) ,\o_o\> \}-J(\nu_{ 2}-\nu_{1})\bigr), \\
\mathbf i([b_1],[b_2]) = \mathrm{length}([b_1],[b_2]) - \mathbf {b_2}([b_1],[b_2]).
\end{gather*}    
\end{definition}
By (\ref{eq:1.2}) and (\ref{eq:1.4}), one sees that the definitions of $\mathbf{b_1}$, $\mathbf{b_2}$ and $\mathbf{i}$ coincide with the definitions in $\S \ref{sec:2.2}$ for $\GL_n$. 

By (\ref{eq:1.3}), we have $\de(b_2)-\de(b_1) = -\mathbf{b_2}([b_1],[b_2]) + \mathbf{b_1}([b_1],[b_2]).$ Hence
\begin{align*}\tag{2.1}\label{eq:2.1}
\text{Ess-gap}([b_1],[b_2])  &= \leng([b_1],[b_2]) -\de(b_1)+\de(b_2)\\
                               &=  \leng([b_1],[b_2]) -\mathbf{b_2}([b_1],[b_2]) + \mathbf{b_1}([b_1],[b_2])\\
                               &= \mathbf {i}([b_1],[b_2]) + \mathbf{b_1}([b_1],[b_2]).
\end{align*}


\begin{proposition}\label{prop:ige0}
Let $[b_1],[b_2]\in B(\bG)$ with $[b_1]\le[b_2]$. We have
$$\mathbf {i}([b_1],[b_2]) \ge 0.$$
In particular, $\text{Ess-gap}([b_1],[b_2]) = 0 $ if and only $\mathbf {b_1}([b_1],[b_2])=0$ and $\mathbf i([b_1],[b_2])=0$.

\end{proposition}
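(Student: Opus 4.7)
The plan is to reduce $\mathbf{i}([b_1],[b_2]) \ge 0$ to a one-line ceiling estimate on each $\s_0$-orbit, and then read off the ``in particular'' clause directly from (\ref{eq:2.1}). Fix a dominant $\l \in X_*(T)$ as in (\ref{eq:1.2}) and, for each $o \in \BS/\<\s_0\>$, set
\[
a_o = \<\underline\l - \nu_{b_1},\o_o\>, \qquad c_o = \<\underline\l - \nu_{b_2},\o_o\>, \qquad d_o = a_o - c_o = \<\nu_{b_2} - \nu_{b_1},\o_o\>.
\]
Because $\o_o$ is $\s_0$-invariant, pairing with $\underline\l$ equals pairing with $\l^\diamond$, so $\nu_{b_1} \le \nu_{b_2} \le \l^\diamond$ gives $a_o \ge c_o \ge 0$ and $d_o \ge 0$, with $d_o = 0$ exactly when $o \in J(\nu_{b_2} - \nu_{b_1})$. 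Formula (\ref{eq:1.2}) then exhibits $\leng([b_1],[b_2])$ as the sum of the non-negative integers $\lceil a_o\rceil - \lceil c_o\rceil$.

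The key step is to recognise, via the criterion (\ref{eq:1.4}), that $\mathbf{b_2}([b_1],[b_2])$ counts exactly those orbits with $c_o \in \BZ$ and $d_o > 0$. For each such orbit,
\[
\lceil a_o\rceil - \lceil c_o\rceil = \lceil c_o + d_o\rceil - c_o \ge 1,
\]
because $c_o$ is an integer and $d_o > 0$. Dropping the non-negative contributions from the remaining orbits gives $\leng([b_1],[b_2]) \ge \mathbf{b_2}([b_1],[b_2])$, which is precisely $\mathbf{i}([b_1],[b_2]) \ge 0$.

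For the final clause, (\ref{eq:2.1}) already writes $\text{Ess-gap}([b_1],[b_2])$ as the sum $\mathbf{i}([b_1],[b_2]) + \mathbf{b_1}([b_1],[b_2])$ of two non-negative integers, so vanishing of the sum forces both summands to vanish. The only point requiring attention is that the identities (\ref{eq:1.2})--(\ref{eq:1.4}) must be applied to a possibly non-quasi-split adjoint $\bG$; this is exactly what the extensions via the bijection (\ref{eq:1.1}) recorded at the end of \S\ref{sec:1.3} supply, so no genuine obstacle arises.
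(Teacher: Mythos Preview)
Your argument is correct and is essentially the paper's own proof, just with the convenient shorthand $a_o,c_o,d_o$ in place of the pairings; both hinge on expressing $\leng([b_1],[b_2])$ via (\ref{eq:1.2}) as a sum of ceiling differences, identifying $\mathbf{b_2}$ via (\ref{eq:1.4}) with the orbits where $c_o\in\BZ$ and $d_o>0$, and observing that each such orbit contributes at least $1$ while all others contribute non-negatively. The ``in particular'' clause is handled identically via (\ref{eq:2.1}).
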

\begin{proof}
Write $\nu_1 = \nu_{b_1}$ and $\nu_2 = \nu_{b_2}$. Let $\l\in X_*(T )$ be an arbitrary dominant coweight such that $\k(\l) = \k(b)\cdot\k(\g)$ and $  \l^{\diamond}\ge\nu_{2}$. By (\ref{eq:1.4}), we have 
$$\mathbf {b_2}([b_1],[b_2]) = \sharp \bigl(\{o\in\BS/\<\s_0\> \mid \< \underline\l - \nu_2, \o_o\>\in\BZ \}-J(\nu_{ 2}-\nu_{1})\bigr).$$
By (\ref{eq:1.2}), we have
\begin{align*}
  \mathrm{length}([b_1],[b_2]) &= \sum_{o \in \BS/\<\s_0\> }(\lceil \<\underline\l  - \nu_{1} ,\o_o\>\rceil - \lceil\<\underline\l   - \nu_{2} ,\o_o\> \rceil).
\end{align*}
For $o\in J(\nu_{2}-\nu_{1})$, the summand vanishes. For $o\in  \{o\in\BS/\<\s_0\> \mid \<\underline\l -\nu_{ 2},\o_o\>\in\BZ\>\}-J(\nu_{ 2}-\nu_{ 1}) $, the summand is $\ge 1$. For $o\notin \{o\in\BS/\<\s_0\> \mid \<\underline\l -\nu_{2},\o_o\>\in\BZ\>\}$, the summand is $\ge 0$. Hence we have 
\begin{align*}
  \mathbf i([b_1],[b_2]) &= \mathrm{length}([b_1],[b_2]) - \mathbf{b_2}([b_1],[b_2])\ge0.
\end{align*}

The ``in particular" part follows from (\ref{eq:2.1}).
\end{proof}

Motivated by the $\GL_n$ case, we define the lattice points for $\s$-conjugacy classes as follows. Let $[b]\in B(\bG)$. We call $o\in \BS/\<\s_0\>$ a ``\textit{lattice point}'' of $[b]$ if $\<\nu_b,\o_o\> = \<\l([b]),\o_o\>$, or equivalently, $ \<\underline\l  -\nu_b,\o_o\>\in \BZ$ for some (or any) coweight $\l\in X_*(T)$ with $\k(\l)  = \k(b)\cdot\k(\g)$ (here we use (\ref{eq:1.4})). Roughly speaking, $\mathbf {i}([b_1],[b_2])$ is the number of ``interior lattice points'' between $\nu_{1}$ and $\nu_{2}$, $\mathbf {b_1}([b_1],[b_2])$ and $\mathbf {b_2}([b_1],[b_2])$ are the numbers of ``upper and lower lattice points'' respectively, while the ``common lattice points'', that is, those lattice points lying in $J(\nu_{ 2}-\nu_{ 1})$ are not counted.

\section{Affine Deligne--Lusztig varieties}\label{sec:adlv}
\subsection{(Single) affine Deligne--Lusztig variety} \label{sec:3.1}
Let $\breve \CI$ be the $\s$-stable Iwahori subgroup corresponding to the base alcove $\mathfrak{a}$. Let $\tW= N_T(\breve F)/(T(\breve F) \cap \breve \CI)$ be the Iwahori--Weyl group. The choice of special vertex of $\mathfrak{a}$ (see \ref{sec:1.1}) gives rise to a bijection
$$\tW \cong X_*(T)_{\G_0} \rtimes W=\{t^{\underline\l} w\mid \underline\l \in X_*(T)_{\G_0}, w \in W\}.$$
For any $w\in \tW$, we fix a representative $\dot{w}\in N_T(\breve F)$. Let $\tilde \BS$ be the set of simple reflections in $\tW$ determined by $\mathfrak{a}$ and let $\ell$ be the corresponding length function. Let $\Omega$ be the set of length zero elements in $\tW$. Then $\tW = W_a\rtimes \Omega$, where $W_a $ is the affine Weyl group. We can identify $\Omega$ with $\pi_1(\bG)_{\G_0}$. For any Levi subgroup $M$, set $\tW_M = X_*(T)_{\G_0} \rtimes W_M$.

For any $b \in \breve G$ and $w \in \tW$, we define the corresponding \textit {affine Deligne--Lusztig variety} corresponding to $w$ and $b$ as
$$X_w(b)=\{g \breve \CI \in \breve G/\breve \CI\mid g \i b \s(g) \in \breve \CI \dot w \breve \CI\} .$$
Note that $X_w(b)$ is a subscheme, locally of finite type, inside the affine flag variety $\breve G/\breve \CI$.

If $b$ and $b'$ are $\s$-conjugate in $\breve G$, then $X_w(b)$ and $X_w(b')$ are isomorphic. Thus, the affine Deligne--Lusztig variety $X_w(b)$ (up to isomorphism)  depends only on the element $w\in\tW$ and the $\s$-conjugacy class $[b]$. 

Let $B(\bG)_w = \{ [b]\in B(\bG) \mid X_w(b)\ne \emptyset\}$. There is a unique maximal element $[b_{w,\max}]$ in $B(\bG)_w$. By~\cite[Lemma 3.2]{MV}, $\dim X_{w}(b_{w,\max}) = \ell(w) - \<{\nu}_{b_{w,\max}},2\rho\>$.

Let $\tW_{\ad}$ be the Iwahori--Weyl group corresponding to $\bG_{\ad}$ and $T_{\ad}$. For any $\t\in \Omega$, let $X_w(b)^{\t}$ be the intersection of $X_w(b)$ with the connected components of the affine flag variety corresponding to $\t$. Then the natural map $\bG\rightarrow\bG_{\ad}$ induces an isomorphism $X_w(b)^{\t}\cong X_{w_\ad}(b_{\ad})^{\t_{\ad}}$, where $w_{\ad}$ (resp. $\t_{\ad}$) is the image of $w$ (resp. $\t$) under the natural map $\tW\rightarrow\tW_{\ad}$.

\begin{proposition}\label{dimXwb}
Let $[b]\in B(\bG)_w$, we have 
$$\dim X_w(b)\ge \dim X_w(b_{w,\max}) +\text{Ess-gap}([b],[b_{w,\max}])  . $$

\end{proposition}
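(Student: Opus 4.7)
The plan is to combine the Mazur-Viehmann dimension formula with the purity theorem applied to the Newton stratification of the Iwahori double coset $\breve \CI \dot w \breve \CI$.

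First I rewrite the target. Using $\dim X_w(b_{w,\max}) = \ell(w) - \<\nu_{b_{w,\max}}, 2\rho\>$ and the definition of $\text{Ess-gap}$, the inequality is equivalent to
\[
\dim X_w(b) \ge \ell(w) - \<\nu_b, 2\rho\> - \leng([b], [b_{w,\max}]).
\]
I then choose a maximal chain $[b] = [b^{(0)}] < [b^{(1)}] < \cdots < [b^{(r)}] = [b_{w,\max}]$ in $B(\bG)_w$ and reduce to the following one-step claim: for every covering relation $[b_1] \lessdot [b_2]$ in $B(\bG)_w$,
\[
\dim X_w(b_1) \ge \dim X_w(b_2) + \<\nu_{b_2} - \nu_{b_1}, 2\rho\> - 1.
\]
Given this claim, since $\leng_{B(\bG)}([b^{(i-1)}], [b^{(i)}]) \ge 1$, summing telescopically and using the additivity of $\text{Ess-gap}$ along chains yields the proposition.

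To establish the one-step claim, I view $\breve \CI \dot w \breve \CI$ as a perfect (ind-)scheme carrying the $\sigma$-conjugation action of $\breve \CI$, so that its Newton stratification is $\breve \CI \dot w \breve \CI = \bigsqcup_{[b] \in B(\bG)_w} \bigl((\breve \CI \dot w \breve \CI) \cap [b]\bigr)$. The Lang-type map $g \mapsto g^{-1} b \sigma(g)$ identifies $X_w(b)$ with the quotient of $(\breve \CI \dot w \breve \CI) \cap [b]$ by this $\sigma$-conjugation action, yielding a dimensional relation of the shape
\[
\dim X_w(b) = \dim\bigl((\breve \CI \dot w \breve \CI) \cap [b]\bigr) - \<\nu_b, 2\rho\>,
\]
where the correction term reflects the codimension of the stabilizer $\bJ_b \cap \breve \CI$ inside $\breve \CI$, analyzed via a Lang torsor. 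Because $[b_1] \lessdot [b_2]$ is a covering in $B(\bG)_w$, the $[b_1]$-stratum is immediately adjacent to the $[b_2]$-stratum in the Newton stratification (the intermediate classes in $B(\bG) \setminus B(\bG)_w$ contribute only empty strata and are invisible here). By the purity theorem for $F$-isocrystals with $G$-structure (de Jong-Oort, and its refinements by Vasiu, Yang, Viehmann, and Hamacher), the $[b_1]$-stratum has codimension at most $1$ in the closure of the $[b_2]$-stratum. Substituting into the dimensional relation produces exactly the one-step inequality.

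The main obstacle is the precise bookkeeping in the dimensional identification $\dim X_w(b) = \dim((\breve \CI \dot w \breve \CI) \cap [b]) - \<\nu_b, 2\rho\>$: one must justify both the Lang-torsor quotient description and the equality $\mathrm{codim}_{\breve \CI}(\bJ_b \cap \breve \CI) = \<\nu_b, 2\rho\>$ in the appropriate ind-scheme or perfect-scheme sense. A secondary point is confirming that coverings in the subposet $B(\bG)_w$ correspond exactly to immediate adjacencies among the non-empty Newton strata, so that purity applies in its standard codimension-one form. Once these are in hand, the telescoping argument is routine.
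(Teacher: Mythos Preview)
Your ingredients are right—the dimensional relation $\dim_{\breve \CI}(X\cap[b]) = \dim X_w(b) + \langle\nu_b,2\rho\rangle$ and purity for the Newton stratification of $X=\breve\CI\dot w\breve\CI$ are exactly what the paper uses—but the reduction to the one-step claim is where the argument breaks down. Purity, even in Hamacher's strong form, does \emph{not} say that for every covering $[b_1]\lessdot[b_2]$ in $B(\bG)_w$ the $[b_1]$-stratum has codimension $\le 1$ in the closure of the $[b_2]$-stratum. What strong purity actually gives goes the other way: starting from an irreducible component $C$ of $X\cap[b']$ that is not already dense in an irreducible component of $X$, there exists \emph{some} $[b'']>[b']$ and a component $C''$ of $X\cap[b'']$ with $C\subset\overline{C''}$ of codimension $1$. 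The class $[b'']$ is dictated by the geometry; it need not be a cover of $[b']$ in $B(\bG)_w$, and conversely a prescribed cover need not be reachable by such a step. So you cannot fix a maximal chain in $B(\bG)_w$ in advance and expect each step to have dimension drop $\le 1$. What you flagged as a ``secondary point'' is in fact the crux, and it is not a matter of bookkeeping.

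The paper's argument repairs this by running the construction bottom-up. Take a top-dimensional component $C_0$ of $X\cap[b]$ and iterate strong purity to produce $C_0\subset\overline{C_1}\subset\overline{C_2}\subset\cdots$, each inclusion of codimension $1$, until $\overline{C_k}$ is an irreducible component of $X$. Since $X=\breve\CI\dot w\breve\CI$ is irreducible, this forces $[b_k]=[b_{w,\max}]$. The resulting chain $[b]=[b_0]<\cdots<[b_k]=[b_{w,\max}]$ is \emph{some} chain in $B(\bG)$ (not chosen beforehand, not necessarily maximal), so $k\le\leng([b],[b_{w,\max}])$; meanwhile the codimension of $C_0$ in $X$ is exactly $k$. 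That yields the inequality directly. Your telescoping is fine, but only along this geometrically produced chain. A further point you did not address: the paper carries out this argument in equal characteristic (where the admissible-subset formalism and Hamacher's purity apply cleanly) and then transfers to mixed characteristic via the fact that $\dim X_w(b)$ is expressible through class polynomials of the Iwahori--Hecke algebra, hence depends only on the extended affine Weyl group data.
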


\begin{proof}
We first consider the equal characteristic case. Let 
$$X = \breve\CI w \breve\CI. $$

Following \cite[\S2.5]{He-CDM}, we introduce the notion of bounded admissible subset of $\breve G$. Let $\breve \CI_r$ be the $r$-th Moy-Prasad subgroup associated with the barycenter of the base alcove. Denote by $\pi_r: \breve G \to \breve G / \breve\CI_r$ the natural quotient map. A subset $Y$ of $\breve G$ is called bounded \textit{admissible} if there exist $r \in \BN$, $w \in \tW$ and a locally closed subvariety $Z$ of $\cup_{w' \leq w} \breve\CI w' \breve\CI / \breve\CI_r$ such that $Y = \pi_r\i(Z)$. In this case, the closure of $Y$ is defined as $\pi_r\i(\overline Z)$, where $\overline Z$ is the closure of $Z$ in $\cup_{w' \leq w} \breve\CI w' \breve\CI / \breve\CI_r$. Furthermore, we say that a subset $Y_1$ of $Y$ is an irreducible component of $Y$ if $Y_1 = \pi_r\i(Z_1)$ with $Z_1$ an irreducible component of $Z$. We define the relative dimension
$$\dim_{\breve \CI} Y_1 = \dim(Y_1/\breve \CI_r ) - \dim (\breve \CI /\breve \CI_r)$$ and
$$\dim_{\breve \CI} Y = \max_{Y_1}\dim_{\breve \CI} Y_1,$$
where $Y_1$ runs over all irreducible components of $Y$.

Note that $X$ and $X\cap [b]$ are admissible subsets and $\dim_{\breve \CI}X\cap [b] = \dim X_w(b) + \<\nu_b,2\rho  \>$. By \cite[Propositon 1 and Corollary 1]{Ham17}, the Newton stratification of $X$ satisfies topological strong purity in the sense of \cite[Definition 1]{Ham17}. Let $C_0$ be an irreducible component of $X\cap [b]$ of maximal (relative) dimension. Apply the argument as in \cite[Proposition 5.13]{Ham15} or \cite[Lemma 5.12]{Viehmann2015}, there exist a chain $[b] = [b_0]<[b_1]<\cdots < [b_k] $ and irreducible components $C_i$ of $X\cap [b_i]$ for $i=1,2,\ldots,k$ such that

\begin{itemize}
\item The closure $\overline{C_k}$ is an irreducible component of $X$,
\item $C_i\subseteq \overline{C_{i+1}}$ and the codimension of $C_i$ in $\overline{C_{i+1}}$ is $1$ for $i=0,1,\ldots,k-1$.
\end{itemize}
Since $X$ is irreducible, $[b_k]=[b_{w,\max}]$. Then the codimension of $C_0$ in $\overline{X\cap [b_{w,\max}]} = X$ is $\le\mathrm{length}([b],[b_{w,\max}])$. Hence
\begin{align*}
    (\dim X_w(b_{w,\max}) + \<\nu_{b_{w,\max}},2\rho\>) - ( \dim X_w(b) + \<\nu_b,2\rho\>)&= \dim_{\breve \CI}X - \dim_{\breve \CI}C_0\\
    &\le \mathrm{length}([b],[b_{w,\max}]).
\end{align*}
Thus,
$$\dim X_w(b)\ge \dim X_w(b_{w,\max}) +\text{Ess-gap}([b],[b_{w,\max}])  . $$

Note that the statement can be translated into properties of extended affine Weyl group via class polynomial or Deligne--Lusztig reduction (see \cite[Theorem 6.1]{He14}). Hence it is also true in the mixed characteristic case. 
\end{proof}


\subsection{Certain union of affine Deligne--Lusztig varieties}\label{sec:3.2}
The $\{\mu\}$-admissible set is defined as
$$\Adm(\{\mu\}) = \{w\in\tW\mid w\le t^{x(\underline\mu)}\text{ for some }x\in W\}.$$ 
Let $K\subseteq \tilde{\BS}$ be a $\s$-stable subset such that $W_K$ is finite. Let $ \breve \CK $ be the standard parahoric subgroup of $G(\breve F)$ corresponding to $K$. We set
$$X(\mu,b)_K = \{  g\breve \CK \mid g^{-1}b\s(g) \in \breve \CK \Adm(\{\mu\})\breve \CK   \}.$$
We simply write $X(\mu,b)$ for $X(\mu,b)_{\emptyset}$. By \cite{He16}, $X(\mu,b)_K \ne \emptyset$ if and only if $[b] \in B(\bG,\{\mu\}) $.

\begin{proposition}\label{prop:dimxmub}
For any $[b]\in B(\bG,\{\mu\})$, we have 
$$\dim X(\mu,b)_K \ge \<\underline\mu -\nu_{\mu,\max}, 2\rho\> +\text{Ess-gap}([b],[b_{\mu,\max}]).$$

\end{proposition}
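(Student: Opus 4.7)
The plan is to mimic the proof of Proposition~\ref{dimXwb} at the parahoric level. The argument breaks into two pieces: (A) a base-case bound for the generic (i.e., maximum) Newton stratum $[b_{\mu,\max}]$, and (B) propagation to arbitrary $[b]\le[b_{\mu,\max}]$ via purity of the Newton stratification on the parahoric admissible locus.

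For the base case, my goal is $\dim X(\mu,b_{\mu,\max})_K\ge\<\underline\mu-\nu_{\mu,\max},2\rho\>$. I would take $w=t^{\underline\mu}\in\Adm(\{\mu\})$: since $\underline\mu$ is dominant, $\ell(w)=\<\underline\mu,2\rho\>$, and since $t^{\underline\mu}$ is a maximal element of $\Adm(\{\mu\})$, comparison of Newton and Hodge polygons in $\breve\CI t^{\underline\mu}\breve\CI$ shows that $B(\bG)_{t^{\underline\mu}}=B(\bG,\{\mu\})$, and in particular $[b_{w,\max}]=[b_{\mu,\max}]$. By \cite[Lemma 3.2]{MV},
$$\dim X_w(b_{\mu,\max})=\ell(w)-\<\nu_{\mu,\max},2\rho\>=\<\underline\mu-\nu_{\mu,\max},2\rho\>.$$
Projecting via $\pi_K\colon\breve G/\breve\CI\to\breve G/\breve\CK$, the image of $X_w(b_{\mu,\max})$ lies in $X(\mu,b_{\mu,\max})_K$, and I would check that on the top Newton stratum the restriction of $\pi_K$ has generically zero-dimensional fibers (exploiting the favorable position of $t^{\underline\mu}$ in its $W_K$-double coset when $\underline\mu$ is dominant). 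This yields the base-case inequality.

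For the general case, I would invoke the purity theorem for the Newton stratification of the parahoric admissible locus $\breve\CK\Adm(\{\mu\})\breve\CK$ in $\breve G$, as established in \cite{dJO, Va, Ya, Vi13, Ham17} for $F$-isocrystals with $G$-structure. Then the argument follows exactly as in the proof of Proposition~\ref{dimXwb}: combining purity with the ranked structure of $B(\bG)$ (Chai) produces, for any $[b]<[b_{\mu,\max}]$, a chain $[b]=[b_0]<[b_1]<\cdots<[b_k]=[b_{\mu,\max}]$ with $k=\mathrm{length}([b],[b_{\mu,\max}])$ and irreducible components $C_i$ of the parahoric $[b_i]$-stratum satisfying $\mathrm{codim}(C_i,\overline{C_{i+1}})=1$. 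Converting the codimension bound into a dimension bound via the Newton-point shift $\dim_{\breve\CK}(\text{stratum of }[b])=\dim X(\mu,b)_K+\<\nu_b,2\rho\>$ gives
$$\dim X(\mu,b)_K+\<\nu_b,2\rho\>\ge \dim X(\mu,b_{\mu,\max})_K+\<\nu_{\mu,\max},2\rho\>-\mathrm{length}([b],[b_{\mu,\max}]),$$
which combined with the base-case bound rearranges, using $\text{Ess-gap}([b],[b_{\mu,\max}])=\<\nu_{\mu,\max}-\nu_b,2\rho\>-\mathrm{length}([b],[b_{\mu,\max}])$, to the stated inequality.

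The main obstacle is the base case, specifically verifying that $\pi_K$ is dimension-preserving on $X_{t^{\underline\mu}}(b_{\mu,\max})$; this amounts to the expected dimension formula for the maximum Newton stratum at parahoric level and requires careful bookkeeping of how $t^{\underline\mu}$ sits inside its $W_K$-double coset. As in Proposition~\ref{dimXwb}, once the equal characteristic case is handled, the mixed characteristic case follows via the class polynomial / Deligne--Lusztig reduction machinery.
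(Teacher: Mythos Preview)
Your overall strategy---purity of the Newton stratification on the admissible locus, combined with the ranked poset structure of $B(\bG)$---matches the paper's, but there is a genuine gap in step (B) and an unnecessary detour in step (A).

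In (B) you assert that the purity chain terminates at $[b_{\mu,\max}]$. This worked in Proposition~\ref{dimXwb} because $\breve\CI w\breve\CI$ is irreducible, so the chain must reach the unique generic Newton point $[b_{w,\max}]$. But $X=\breve\CK\Adm(\{\mu\})\breve\CK$ is \emph{not} irreducible: the chain starting from $C_0$ terminates at the generic Newton stratum $[b']$ of whichever irreducible component of $X$ contains $C_0$, and the paper explicitly flags that $[b']$ need not equal $[b_{\mu,\max}]$. Without $[b']=[b_{\mu,\max}]$ you cannot feed in your base-case bound for $\dim X(\mu,b_{\mu,\max})_K$, and establishing density of the $[b_{\mu,\max}]$-stratum in $X$ is an additional nontrivial input you have not supplied.

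The paper sidesteps both this issue and your entire step (A) in one stroke: by \cite[Proposition~2.1]{HN17} and \cite[\S2.5]{He-CDM}, every irreducible component of $X$ has the same relative dimension $\langle\underline\mu,2\rho\rangle$. Since $C'$ is open in such a component, $\dim_{\breve\CK}C'=\langle\underline\mu,2\rho\rangle$ regardless of which $[b']$ is its generic Newton point. Combined with $\mathrm{length}([b],[b'])\le\mathrm{length}([b],[b_{\mu,\max}])$, this yields directly
\[
\dim X(\mu,b)_K+\langle\nu_b,2\rho\rangle=\dim_{\breve\CK}C_0\ge\langle\underline\mu,2\rho\rangle-\mathrm{length}([b],[b_{\mu,\max}]),
\]
which rearranges to the proposition. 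No base-case computation for $[b_{\mu,\max}]$ and no fiber-dimension analysis of $\pi_K$ is needed.
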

\begin{proof}
The proof is similar to the proof of Proposition \ref{dimXwb}. We only need to consider the equal characteristic case. Let
$$X = \bigsqcup_{w \in \Adm(\{\mu\})} \breve  \CK w \breve  \CK. $$ 
Then $X$ and $X\cap[b]$ are bounded admissible subsets of $\breve G$. For any bounded admissible subset $Y$ of $\bG$, we define $\dim_{\breve \CK}(Y) = \dim_{\breve \CI} (Y) - \dim_{\breve \CI} (\breve \CK)$. Note that for each irreducible component $C$ of $X$, one has $\dim _{\breve \CK}C = \<\underline\mu,2\rho\>$ (see \cite[\S2.5]{He-CDM} and \cite[Proposition~2.1]{HN17}). Moreover, we have $\dim_{\breve \CK}X\cap [b] = \dim X(\mu,b)+\<\nu_b,2\rho\>$.

Let $C_0$ be an irreducible component of $X\cap [b]$ such that $\dim_{\CK}C_0$ is maximal. Similar to the proof of Proposition \ref{dimXwb}, there exist some $[b']\in B(\bG,\{\mu\})$ and an irreducible component $C'$ of $X\cap [b']$ such that 
\begin{itemize}
\item The closure $\overline{C'}$ is an irreducible component of $X$;
\item $C_0\subseteq \overline{C'}$ and the codimension of $C_0$ in $\overline{C'}$ is $\le \mathrm{length}([b],[b']) \le \mathrm{length}([b],[b_{\mu,\max}])$.
\end{itemize}
We point out that $[b']$ is not necessarily equal to $[b_{\mu,\max}]$. Then
\begin{align*}
\dim X(\mu,b)_K + \<\nu_b,2\rho\> & = \dim_{\breve \CK} C_0 \\
                               &\ge  \<\underline\mu,2\rho\> -\mathrm{length}([b],[b_{\mu,\max}])\\
                               &= \<\underline\mu-\nu_{\mu,\max},2\rho\> + \<\nu_{\mu,\max},2\rho\> - \mathrm{length}([b],[b_{\mu,\max}])
\end{align*}
Therefore, 
\[\dim X(\mu,b)_K \ge \<\underline\mu -\nu_{\mu,\max}, 2\rho\> +\text{Ess-gap}([b],[b_{\mu,\max}]). \qedhere\]
\end{proof}

If $\mu$ is non-central in every quasi-simple component of $\bG$ over $F$, then Proposition \ref{prop:dimxmub} recovers the inequality in \cite[Theorem 3.5]{GHN3}.


\subsection{Hodge-Newton decomposition}\label{sec:HN} Let $J$ be a $\s_0$-stable subset of $\mathbb{S}$. Let $\mathfrak{P}_J^{\s}$ be the set of $\s$-stable parabolic subgroups which conjugate to the standard parabolic subgroup $P_J$ and contain $T$. For any $P\in \mathfrak{P}_J^{\s}$, there is a unique Levi factor $M$ of $P$ which contains $T$. We call $P = MN $ the semi-standard Levi decomposition of $P$, where $N$ is the unipotent radical of $P$. There is a unique element $z_P$ in $W^{J}$ with ${}^{z_{P}}P_J = P$ and ${}^{z_{P}}M_J = M$. Set $\mu_{P} = z_{P}(\mu)$ and let $\{\mu_{P}\}$ be the $W_M$-conjugacy class of $\mu_P$.

Let $[b]\in B(\bG,\{\mu\})$. Assume that $(\mu, [b])$ is Hodge-Newton decomposable with respect to $M_J$ (see \S\ref{sec:1.4} for the definition). By \cite[Proposition 4.8]{GHN2}, there exists a unique element $[b_P]_M\in B(M,\{\mu_P\})$, such that $[b_P]_{M}\subseteq [b]$ and the $M$ dominant Newton point $\nu_{b_P}^M$ of $b_P$ is equal to $z_{P}(\nu_b)$.

We have the following important Theorem.

\begin{theorem}[{\cite[Theorem 4.17]{GHN2}}]\label{thm:HN-decom}
Let $K$ be a $\s$-stable subset of $\tS$ such that $W_K$ is finite and let $J$ be a $\s_0$-stable subset of $\BS$. Let $[b]\in B(\bG,\{\mu\})$ such that $(\mu,[b])$ is Hodge-Newton decomposable with respect to $M_J$. Then 
$$X(\mu,b)_K \cong\bigsqcup_{P=MN  \in \mathfrak{P}_J^{\s}/W_K^{\s}}X^{M}(\mu_{P}, b_P)_{K_{M}},$$
where $P = MN$ is the semi-standard Levi decomposition of $P$ and $K_{M} $ be the set of simple reflections of $W_{M}$ generating $W_K\cap W_{M}$.  
\end{theorem}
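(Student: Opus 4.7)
The plan is to construct mutually inverse maps between the two sides, with Hodge--Newton decomposability entering crucially to force each coset on the left to admit a ``parabolic representative''.

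First I would reduce to the Iwahori level $K = \emptyset$. Writing both $X(\mu,b)_K$ and each $X^M(\mu_P, b_P)_{K_M}$ as quotients of disjoint unions of affine Deligne--Lusztig varieties indexed by $\Adm(\{\mu\})$ and $\Adm^M(\{\mu_P\})$ respectively, the parahoric version will follow from the Iwahori one; the $W_K^{\s}$-quotient on the right emerges by tracking the parahoric action through the inclusion $\tW_M \subseteq \tW$. The standard compatibilities $\Adm^M(\{\mu_P\}) \subseteq \Adm(\{\mu\})$ and $\breve \CK_M \subseteq \breve \CK$ make this passage routine.

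The geometric heart of the argument is a parabolic reduction. After replacing $b$ by its $\s$-conjugate $b_{P_J} \in M_J(\breve F)$, I would show that every coset $g \breve \CI \in X(\mu,b)$ has a representative in $\breve P$ for exactly one $P \in \mathfrak{P}_J^{\s}$. Using an Iwasawa-type decomposition $\breve G = \bigsqcup_{x \in W^J} \breve P_J \, \dot x \, \breve \CI$, write $g = p \, \dot x \, k$; the Hodge--Newton hypothesis $\mu^{\diamond} - \nu_b \in \sum_{j \in J} \BR_{\ge 0} \, \a_j^{\vee}$ should force $x = 1$, since a nontrivial $\dot x$ would contribute a piece of $g\i b \s(g)$ in the unipotent radical $\breve N$ along a direction transverse to $J$, incompatible with both $\Adm(\{\mu\})$ and $\nu_b$. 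Writing then $g = m n k$ with $m \in M(\breve F)$, $n \in \breve N$, $k \in \breve \CI$, a further valuation argument absorbs $n$ after adjusting $m$, yielding the assignment $g \breve \CI \mapsto (P, m \breve \CI_M)$. The inverse map uses the inclusion $M \subseteq \breve G$ together with the two compatibilities above, and disjointness of the images for different $P \in \mathfrak{P}_J^{\s}/W_K^{\s}$ follows from uniqueness in the parabolic decomposition.

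The hard part will be justifying the parabolic reduction: controlling the unipotent radical's contribution under Hodge--Newton decomposability. This is not a formal consequence of $\Adm(\{\mu\})$-combinatorics but a Newton-polygon fact, since a nontrivial $\breve N$-component of $g\i b \s(g)$ would shift its Newton point along a direction forbidden by the HN ``break'' along $J$. In Kottwitz's hyperspecial setting this is done via $F$-isocrystals; for arbitrary parahoric level, an induction on $\ell(w)$ for $w \in \Adm(\{\mu\})$ combined with Deligne--Lusztig reduction (or, equivalently, a semi-infinite Bruhat decomposition argument on $\breve G / \breve \CI$) appears to be the most promising route.
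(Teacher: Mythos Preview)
The paper does not prove this statement; it is quoted verbatim as \cite[Theorem 4.17]{GHN2} and used as a black box. There is therefore no ``paper's own proof'' to compare your proposal against.

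That said, your sketch is broadly in the spirit of the original argument in \cite{GHN2}, which also proceeds by parabolic reduction and reduction to Iwahori level. A few cautions: the step ``a further valuation argument absorbs $n$ after adjusting $m$'' is exactly where the real work lies, and in the general parahoric/non-quasi-split setting it is not a simple valuation trick but requires the machinery of \cite{GHN2} (alcove-geometric control of $\Adm(\{\mu\})$ and the $P$-alcove/fundamental domain techniques). Also, the claim that a nontrivial $x \in W^J$ is excluded purely by the Hodge--Newton inequality on Newton points is too optimistic as stated: one needs the finer statement that the relevant Iwahori double coset already lies in $\breve \CI_M \Adm^M(\{\mu_P\}) \breve \CI_M \cdot \breve N$, which is a combinatorial fact about admissible sets proved in \cite{GHN2}, not just a Newton-polygon obstruction. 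If you want to flesh this into an actual proof you should follow \cite{GHN2} rather than try to redo Kottwitz's isocrystal argument, since the latter does not directly handle arbitrary parahoric level or non-minuscule $\mu$.
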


Let $P,P' \in \mathfrak{P}_J^{\s}$ with the semi-standard Levi decompositions $P=MN,P'= M' N' $. By \cite[Lemma 4.9]{GHN2}, there exists $u\in \tW^{\s}$ such that ${}^{u}P = P'$, ${}^{u}M = M'$ and ${}^{u}  \breve {\CI}_{M}   =  \breve {\CI}_{M'}$, where, for any Levi subgroup $  L$ of $\bG$, we denote $\breve \CI_{  L} = \breve \CI\cap   L(\breve F)$. As $u$ is $\s$-stable, the following diagram is commutative
\begin{align*}\tag{3.1}\label{eq:3.1}
\begin{xymatrix}
{M(\breve F)\ar[d]^{\s}  \ar[r]^{\Ad(u)} & M'(\breve F)\ar[d]^{\s}\\
M(\breve F)  \ar[r]^{\Ad(u)}&  M'(\breve F).}
\end{xymatrix}
\end{align*}
Here $\Ad(u)$ is the map $g\mapsto ugu^{-1}$. Thus, we have a bijection
\begin{align*}\tag{3.2}\label{eq:3.2}
\Ad(u):B(M,\{\mu_P\})\xrightarrow{\sim} B(M',\{\mu_{P'}\}).
\end{align*}

In the rest of this section, we assume that $\bG$ is adjoint. Let $\bH$ be the quasi-split inner form of $\bG$. As in \S\ref{sec:1.2}, we can identify $\bG(\breve F)$ with $\bH(\breve F)$ such that $\s=\Ad(\g)\circ \s_{\bH}$ for some $\g\in N(\breve F)$. Then $\g$ is a lift of some length-zero element $\t\in \tW$. The induced action of $\s_\bH$ on $V$ is the same as $\s_0$ (see \S\ref{sec:1.2}). We can identify $\tW$ with the Iwahori--Weyl group of $\bH$. Then $\s$ acts on $\tW$ as $\Ad(\t)\circ\s_0$.

Let $P \in \mathfrak{P}_J^{\s}$ with the semi-standard Levi decomposition $P=MN$. We now study the Frobenius action $\s\vert_{\tW_M}$ on $\tW_M$. Since $^{z_P}P_J=P$ and $P$ is $\s$-stable, the element $z_{P}^{-1} \t \s_0(z_{P})$ lies in $\tW_J$ (see \cite[Lemma 4.7]{GHN2}).

\begin{lemma}\label{lem:tauJ}

Let $P\in\mathfrak{P}_J^{\s}$ with the semi-standard Levi decomposition $P=MN$. Let $\t_J = z_{P}^{-1} \t \s_0(z_{P})$. Then
\begin{enumerate}
    \item the element $\t_J$ is length zero in $\tW_J$.
    \item the Frobenius morphism $\s\vert_{\tW_M }:\tW_M \rightarrow \tW_M $ equals $\Ad(z_{P})\circ\Ad(\t_J)\circ \s_0 \circ \Ad(z_{P}^{-1})$. In particular, the $L$-action $\s_0^{M}$ of $\s\vert_{M}$ on $V$ is equal to $\Ad(z_{P})\circ \s_0 \circ \Ad(z_{P}^{-1})$, and the $\s_0^{M}$-average $\mu_{P}^{\diamond,M}$ of $\mu_{P}$ is equal to $z_{P}(\mu^{\diamond})$.
\end{enumerate}
\end{lemma}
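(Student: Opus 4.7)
The plan is to prove (2) first by direct substitution and then deduce (1) from it using the standard form of Frobenius actions on Iwahori--Weyl groups of reductive groups over $\breve F$.

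For part (2), I would verify the identity $\s|_{\tW_M}=\Ad(z_{P})\circ\Ad(\t_J)\circ\s_0\circ\Ad(z_{P}^{-1})$ by unwinding the right-hand side on an arbitrary $w\in\tW_M$: it equals $(z_{P}\t_J\s_0(z_{P}^{-1}))\cdot\s_0(w)\cdot(z_{P}\t_J\s_0(z_{P}^{-1}))^{-1}$, and by the very definition $\t_J=z_{P}^{-1}\t\s_0(z_{P})$ the outer factor telescopes to $\t$, giving $\t\s_0(w)\t^{-1}=\s(w)$, as desired. For the $L$-action assertion, I would invoke the uniqueness characterization: $\s_0^M$ is the unique element of the $W_M$-coset of $p(\s|_M)$ that preserves the $M$-dominant chamber $V_+^M$. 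The candidate $\Ad(z_{P})\circ\s_0\circ\Ad(z_{P}^{-1})$ preserves $V_+^M=z_{P}(V_+^{M_J})$ because $\s_0$ stabilizes $V_+^{M_J}$ (as $J$ is $\s_0$-stable), and lies in the correct $W_M$-coset because the discrepancy $z_{P}\,p(\t_J)^{-1}\,z_{P}^{-1}$ sits in $z_{P}W_Jz_{P}^{-1}=W_M$, using that $\t_J\in\tW_J$ has linear part in $W_J$. The formula $\mu_{P}^{\diamond,M}=z_{P}(\mu^{\diamond})$ is then a one-line average of the $\s_0^M$-orbit of $\underline{\mu_P}=z_{P}(\underline\mu)$, since $(\s_0^M)^i=\Ad(z_{P})\circ\s_0^i\circ\Ad(z_{P}^{-1})$.

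For part (1), I would appeal to the general fact that the Frobenius action on the Iwahori--Weyl group of any connected reductive group over $F$ has the form $\Ad(\tau)\circ\s_0^M$ for a unique length-zero element $\tau$ of that group; this applies since $P$ is $\s$-stable, so $M$ is an $F$-rational Levi. Part (2) then identifies this distinguished length-zero element as $\tau_M=z_{P}\t_J z_{P}^{-1}\in\tW_M$. To transport length-zero of $z_{P}\t_J z_{P}^{-1}$ in $\tW_M$ to length-zero of $\t_J$ in $\tW_J$, I would use the natural isomorphism $\tW_J\xrightarrow{\sim}\tW_M$ induced by conjugation by a lift $\dot z_{P}\in N_T(\breve F)$, and argue that this isomorphism is length-preserving because $z_P$ matches the simple affine reflections of $\tW_J$ bijectively with those of $\tW_M$ in the common apartment $V$. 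The main technical subtlety lies precisely here: verifying that the fundamental alcove of $W_a^{M_J}$ corresponds to that of $W_a^M$ under the linear action of $z_P$, which I expect to follow routinely from $z_P\in W^J$ sending simple $M_J$-roots bijectively to simple $M$-roots (relative to the compatible Borels of $M_J\subset P_J$ and $M\subset P$).
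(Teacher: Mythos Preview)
Your argument is correct, but it inverts the logical order of the paper and trades an elementary computation for a structural one.

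The paper proves (1) first by a direct application of the length formula: writing $\tau=t^{\underline\lambda}y$, one computes $\ell_J(\tau_J)$ as a sum over $\alpha\in\Phi_J^+$ and compares it term by term with the sum defining $\ell(\tau)$, using that $z_P\in W^J$ and $\sigma_0(z_P)\in W^J$ preserve signs on the relevant roots. This gives $\ell_J(\tau_J)\le\ell(\tau)=0$ with no appeal to alcove geometry or to the structure of Frobenius on $M$. Part (2) then follows, and the $L$-action assertion is read off \emph{using} (1).

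Your route---establish the Frobenius identity (same substitution as the paper), identify $\sigma_0^M$ via its uniqueness characterization in the $W_M$-coset of $p(\sigma)$ using only $\tau_J\in\tW_J$, and then recover (1) from the general shape $\sigma|_{\tW_M}=\Ad(\text{length-zero})\circ\sigma_0^M$---also works. The coset verification you sketch does go through: writing $\tau=t^{\underline\lambda}y$ one finds $z_P\sigma_0 z_P^{-1}=\bigl(z_P(z_P^{-1}y\sigma_0(z_P))^{-1}z_P^{-1}\bigr)\,p(\sigma)$, and the bracketed factor lies in $z_PW_Jz_P^{-1}=W_M$ precisely because the finite part $z_P^{-1}y\sigma_0(z_P)$ of $\tau_J$ lies in $W_J$. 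The price you pay is the final transport step: you must check that conjugation by $z_P$ sends the base $M_J$-alcove to the base $M$-alcove (equivalently, matches $\tilde\BS_J$ with $\tilde\BS_M$). This is true because $z_P\in W^J$ carries the simple roots of $M_J$ bijectively to those of $M$, hence also the highest roots of each component, so the fundamental $M_J$-alcove $\{v\in V_+^{M_J}:\langle v,\theta_j\rangle<1\}$ maps to the fundamental $M$-alcove; but it is a genuine verification that the paper's direct length computation avoids entirely.
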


\begin{proof}
Write $\t = t^{\underline\l}y$ for $\underline \l\in X_*(T)_{\G_0}$ and $y\in W$. Write $z=z_{P}$ for simplicity. 
\begin{enumerate}
\item We have
\begin{align*}
\ell (\t ) & =\sum_{\substack{\b\in\Phi ^{+}\\  y^{-1} \b>0}}\lvert  \<\underline\l, \b\> \rvert +\sum_{\substack{\b\in\Phi ^{+}\\  y^{-1} \b<0}}\lvert  \<\underline\l, \b\>-1 \rvert,
\end{align*}
and
\begin{align*}
\ell_J(\t_J) &=\ell_J(  t^{z^{-1} (\underline\l)  }\cdot z^{-1}y\s_0(z) )\\
&=\sum_{\substack{\a\in\Phi_J^{+}\\ \s_0(z^{-1})y^{-1}z\a>0}}\lvert  \<\underline\l,z\a\> \rvert +\sum_{\substack{\a\in\Phi_J^{+}\\ \s_0(z^{-1})y^{-1}z\a<0}}\lvert  \<\underline\l,z\a\>-1 \rvert.
\end{align*}
Note that $y^{-1}z\a=\s_0(z) (\s_0(z^{-1})y^{-1}z)\a$. Since $(\s_0(z^{-1})y^{-1}z)\in W_J$ and $\s_0(z)\in W^{J}$, the roots $y^{-1}z\a$ and $\s_0(z^{-1})y^{-1}z\a$ have the same signs. Then $\ell_J(\t_J)\le\ell(\t)=0$.

\item Let $m\in \tW_M$. Direct computation shows that
\begin{align*}
  \Ad(z)\circ\Ad(\t_J)\circ \s_0 \circ \Ad(z^{-1})(m) &= \Ad(\t)\circ\Ad(\s_0(z))\circ \s_0 \circ \Ad(z^{-1})(m)\\
  &=\Ad(\t)\circ\s_0(m).  
\end{align*}
Using (1), we get $\s_0^{M} = \Ad(z_{P})\circ \s_0 \circ \Ad(z_{P}^{-1})$. Moreover, we have 
$$\left(\s_0^{M}\right)^{i}(\mu_{P}) = \left(\Ad(z)\circ \s_0 \circ \Ad(z^{-1})\right)^{i}(\mu_{P})=z\s_0^{i}(\mu).$$
Then the statement follows.  \qedhere
\end{enumerate} 
\end{proof}

The following proposition is a key step in the proof of Theorem \ref{thm:main}. Recall that a $\s$-conjugacy class $[b]\in B(\bG,\{\mu\})$ is called $\mu$-ordinary if $\nu_b=\mu^{\diamond}$ (see \S\ref{sec:1.3}).

\begin{proposition}\label{prop:levi}
Let $P\in\mathfrak{P}_J^{\s}$ with the semi-standard Levi decomposition $P=M N$. Let $[b_{\mu_{P},\max} ]_{M}$ be the maximal element in $B(M,\{\mu_{P}\})$. Suppose $[b_{\mu,\max}]$ is $\mu$-ordinary. Then 
\begin{enumerate}
    \item $[b_{\mu_{P},\max} ]_{M}\subseteq [b_{\mu,\max}]$ and $[b_{\mu_{P},\max} ]_{M}$ is $\mu_{P}$-ordinary, i.e., the $M$-dominant Newton point of $[b_{\mu_{P},\max} ]_{M}$ equals $z_{P}(\mu^{\diamond})$.
    \item Let $[b]\in B(\bG,\{\mu\})$ such that $(\mu,[b])$ is Hodge-Newton decomposable with respect to $M_J$. Then
    \begin{gather*}
      \mathrm{length}([b],[b_{\mu,\max}]) = \mathrm{length}_{M}([b_P]_M,[b_{\mu_{P},\max} ]_{M});\\
      \text{Ess-gap}([b],[b_{\mu,\max}]) = \text{Ess-gap}_{M}([b_P]_M,[b_{\mu_{P},\max} ]_{M}),
    \end{gather*}
    where $\mathrm{length}_{M}$ and $\text{Ess-gap}_{M}$ are the length function and the essential gap function on $B(M)$, and $[b_P]_M$ is the $M(\breve F)$-$\s$-conjugacy class of $b_P$.

\end{enumerate}
\end{proposition}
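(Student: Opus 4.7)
The plan is to prove (1) by constructing a $\mu_P$-ordinary class in $M$ using the inner-form identification of \S\ref{sec:1.2} together with Lemma \ref{lem:tauJ}, and to prove (2) by reducing both the length and the essential-gap identities to the same combinatorial expression via formula (\ref{eq:1.2}).

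For (1), Lemma \ref{lem:tauJ}(2) gives $\mu_P^{\diamond,M} = z_P(\mu^\diamond)$. Viewing $M$ as an inner form of its quasi-split form $M_\bH$ via the twist $\Ad(z_P \t_J z_P^{-1})$, the analogue of the bijection (\ref{eq:1.1}) transports the $\mu_P$-ordinary class in $B(M_\bH, \{\mu_P\})$ (which always exists since $M_\bH$ is quasi-split) to a class in $B(M, \{\mu_P\})$ whose $M$-dominant Newton point is exactly $z_P(\mu^\diamond)$. By maximality of the Newton point this class must equal $[b_{\mu_P, \max}]_M$, so $[b_{\mu_P, \max}]_M$ is $\mu_P$-ordinary. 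For the containment $[b_{\mu_P, \max}]_M \subseteq [b_{\mu, \max}]$, pick any representative $b' \in M(\breve F)$ of $[b_{\mu_P, \max}]_M$ and view it in $\breve G$: its Kottwitz invariant in $\pi_1(\bG)_\G$ equals $\k(\mu_P) = \k(\mu)$ since $W$ acts trivially on $\pi_1(\bG)_\G$, and its $\bG$-dominant Newton point is $\mu^\diamond$ because $z_P \in W$ and $\mu^\diamond$ is already $\bG$-dominant. These coincide with the invariants of $[b_{\mu, \max}]$, so $[b']_\bG = [b_{\mu, \max}]$.

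For (2), since $\text{Ess-gap} = \<\nu_2 - \nu_1, 2\rho\> - \leng$ and Hodge--Newton decomposability places $\mu^\diamond - \nu_b \in \sum_{j \in J} \BR_{\ge 0} \a_j^\vee$, the identity $\<\mu^\diamond - \nu_b, 2\rho_\bG\> = \<z_P(\mu^\diamond - \nu_b), 2\rho_M\>$ is immediate ($z_P$ sends simple $M_J$-coroots to simple $M$-coroots, and each simple coroot pairs as $1$ with $\rho$). Hence it suffices to prove the length identity. By the second form of (\ref{eq:1.2}), this reduces to establishing $\de(b_{\mu, \max}) = \de_M(b_{\mu_P, \max}) = 0$ and $\de(b) = \de_M(b_P)$. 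The vanishing of the two $\mu$-ordinary defects follows from (\ref{eq:1.3}) and (\ref{eq:1.4}): the $\s_0$-invariance of $\omega_o$ gives $\<\underline\mu - \mu^\diamond, \omega_o\> = 0$ for every $o$, hence $\de(b_{\mu, \max}) = 0$, and the $M$-analogue works the same way using $\s_0^M$ and Lemma \ref{lem:tauJ}(2). For the second equality, write $\mu^\diamond - \nu_b = \sum_{j \in J} c_j \a_j^\vee$ so that $\<\underline\mu - \nu_b, \omega_o\> \equiv \sum_{j \in o} c_j \pmod \BZ$, which vanishes mod $\BZ$ for every $o \not\subseteq J$; the same expression $\sum_{j \in o} c_j$ arises for $\<\underline{\mu_P} - \nu_{b_P}^M, \omega_{z_P(o)}^M\>$ in $M$, and the bijection between $\s_0$-orbits on $J$ and $\s_0^M$-orbits on simples of $M$ induced by $z_P$ (Lemma \ref{lem:tauJ}(2)) shows that the two defects count the same set of orbits.

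The main obstacle I anticipate is tracking the twist element of $M$ carefully so that the Kottwitz compatibility $\k_M(\l_M) = \k_M(b_P) \cdot \k_M(\g_M)$ required to invoke (\ref{eq:1.2}) and (\ref{eq:1.4}) in $M$ is legitimate. This uses Lemma \ref{lem:tauJ}(1), that $\t_J$ is length-zero in $\tW_J$, together with the naturality of the Kottwitz diagrams for $\bG$, $M$ and their quasi-split inner forms. A related subtlety is that $M$ need not be adjoint; one resolves this by passing to $M_\ad$, where (\ref{eq:1.3}) and (\ref{eq:1.4}) apply directly and both sides of the identities transport cleanly via the natural bijection $B(M, \{\mu_P\}) \cong B(M_\ad, \{\mu_{P, \ad}\})$.
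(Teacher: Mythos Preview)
Your argument for (1) has a genuine gap: the bijection (\ref{eq:1.1}) for $M$ does \emph{not} carry $B(M_\bH,\{\mu_P\})$ into $B(M,\{\mu_P\})$. By the commutative square following (\ref{eq:1.1}), transporting the $\mu_P$-ordinary class of $B(M_\bH,\{\mu_P\})$ back to $B(M)$ produces a class with the correct Newton point $z_P(\mu^\diamond)$ but with Kottwitz invariant $\k_M(\mu_P)\cdot\k_M(\g_M)^{-1}$, which need not equal $\k_M(\mu_P)$. Notice that your argument never invokes the hypothesis that $[b_{\mu,\max}]$ is $\mu$-ordinary; taking $J=\BS$ (so $M=\bG$) it would then prove that $[b_{\mu,\max}]$ is always $\mu$-ordinary, which is false for non-quasi-split $\bG$. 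The paper's proof is genuinely different: it uses the combinatorial criterion of \cite[Corollary 3.6]{HN2018}, which says $\mu$-ordinariness is equivalent to $\<\underline\l,\o_o\>\in\BZ$ for all $o\in(\BS-I(\mu^\diamond))/\<\s_0\>$ where $\t=t^{\underline\l}y$, and then shows directly (using $z_P^{-1}(\underline\l)\in\sum_{j\in J}\BR\a_j^\vee$ and $z_P^{-1}(\underline\l)\in\underline\l+\sum_i\BZ\a_i^\vee$) that this integrality condition for $\bG$ implies the analogous one for $M$.

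Your argument for (2) contains a related error: the claim $\de(b_{\mu,\max})=0$ is false. You are applying (\ref{eq:1.4}) with $\l=\mu$, but for a non-quasi-split adjoint group the cocharacter $\l$ in (\ref{eq:1.4}) must satisfy $\k(\l)=\k(b)\cdot\k(\g)$, not $\k(\l)=\k(b)$; see the sentence after (\ref{eq:1.4}) and the use of $\k(b)\cdot\k(\g)$ in (\ref{eq:1.2}). For a concrete counterexample, take the Tits datum $(A_3,\Ad(\t_2),\{\o_2^\vee\})$: here $[b_{\mu,\max}]$ is $\mu$-ordinary (it appears in the table of Proposition \ref{prop:table}), yet under (\ref{eq:1.1}) it corresponds to the class in $B(PGL_4)$ with Newton point $\o_2^\vee$ and Kottwitz invariant $0$, i.e.\ slopes $(\tfrac12,\tfrac12,-\tfrac12,-\tfrac12)$, whose defect is $2$. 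The paper does not claim these defects vanish; it uses $\de(b)=\de_M(b_P)$ and $\de(b_{\mu,\max})=\de_M(b_{\mu_P,\max})$ instead. Your $c_j$-argument for $\de(b)=\de_M(b_P)$ is on the right track, but for the second equality you again need the correct twist $\l$ satisfying $\k(\l)=\k(\mu)\cdot\k(\g)$, and the integrality for $o\not\subseteq J$ then comes from part (1) rather than from $\<\underline\mu-\mu^\diamond,\o_o\>=0$.
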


\begin{proof} 
 \begin{enumerate} 
\item Recall that $\t$ is the length-zero element in $\tW$ such that the action of $\s$ on $\tW$ equals $\Ad(\t)\circ \s_0$. Write $\t=t^{\underline\l}y$ where $\underline\l\in X_*(T)_{\G_0}$ and $y\in W$. Recall that for any $v\in V$, we denote $I(v) = \{i \in \BS\mid \<v,\a_i\>=0\} $. Write $z=z_{P}$ for simplicity. For any $i\in J$, let $\o_i^J\in \sum_{j\in J}\BR \a_j$ be the fundamental weight of $M_J$ corresponding to $i$. For any $o\in J/\<\s_0\>$, let $\o_o^J = \sum_{i\in o}\o_i^J$. By Lemma \ref{lem:tauJ} (2), the $L$-action $\s_0^{M}$ of $\s\vert_{M}$ is equal to $\Ad(z)\circ \s_0 \circ \Ad(z^{-1})$. Thus $z(\o_o^J),o\in \BS/\<\s_0\>$ are exactly the sums of the $\s_0^{M}$-orbits of fundamental weights of $M$. 

By \cite[Corollary 3.6]{HN2018}, $[b_{\mu,\max}]$ is $\mu$-ordinary if and only if $\< \underline\l ,\o_o\>\in \BZ$ for any $o \in \bigl(\BS - I(\mu^{\diamond} )\bigr)/\<\s_0\>$. Similarly, $[b_{\mu_P,\max}]_M$ is $\mu_P$-ordinary if and only if $\<z^{-1}( \underline\l ), \o_o^J \>\in \BZ$ for any $o \in \bigl(J - I(\mu^{\diamond} )\bigr)/\<\s_0\>$. Since the action of $\s$ on $\tW$ is of finite order, there exists some $N\in\BN$ such that $(\t\s_0)^N = 1$ and hence $(t^{z^{-1}(\underline\l) }z^{-1}y\s_0z)^N = (z^{-1}\t\s_0z)^N = 1$. Since $z^{-1}y \s_0 z \in W_J\rtimes \<\s_0\>$, we have $z^{-1}(\underline\l)\in \sum_{j\in J}\BR\a_j^{\vee}$. It follows that $\<z^{-1}( \underline\l) , \o_o^J \> = \<z^{-1}(\underline\l),\o_o\> $ for any $o\in J/\<\s_0\>$. Note that $z^{-1}(\underline\l) \in \underline\l + \sum_{i\in\BS}\BZ\a_i^{\vee}$. Thus, the condition that $[b_{\mu,\max}]$ is $\mu$-ordinary implies that $[b_{\mu_P,\max}]_M$ is $\mu_P$-ordinary.


\item By (\ref{eq:1.2}) and the assumption that $(\mu,[b])$ is Hodge-Newton decomposable with respect to $M_J$, we get 
\begin{align*}
\mathrm{length}([b],[b_{\mu,\max}]) &= \<\mu^{\diamond}-\nu_b,\rho\> +\frac{1}{2}\de(b)- \frac{1}{2}\de(b_{\mu,\max})  \\
&= \<\mu^{\diamond}-\nu_b,\rho_{M_J}\> +\frac{1}{2}\de(b)- \frac{1}{2}\de(b_{\mu,\max})  \\
&= \<z(\mu^{\diamond})-z(\nu_b),\rho_M\> +\frac{1}{2}\de_M(b_P)- \frac{1}{2}\de_M(b_{\mu_P,\max}).
\end{align*}
By definition, the $M$-dominant Newton point of $b_P$ is equal to $z(\nu_b)$. Note that (\ref{eq:1.2}) also works for $M$. Combined with (1), we get $\mathrm{length}([b],[b_{\mu,\max}])=\mathrm{length}_{M}([b_P]_M,[b_{\mu_{P},\max} ]_{M})$, and the equality for essential gap also follows.
\qedhere
\end{enumerate}\end{proof}

\section{Classification of zero-dimensional ADLV}
\subsection{Extended Lubin-Tate type} We first recall the notion of of Tits datum and Coxeter datum from \cite[Definition 2.6]{GHR}. A {\it Tits datum} over $F$ is a tuple $(\tilde{\Delta}, \d, \{\l\})$, where $\tilde{\Delta}$ is an absolute local Dynkin diagram, and $\d$ is a diagram automorphism of $\tilde{\Delta}$, and $\{\l\}$ is a $W$-conjugacy class in $X_*$, the coweight lattice of the reduced root system associated with $\tilde{\Delta}$. A {\it Coxeter datum} over $F$ is a tuple $((W_a,\BS_a),\d,\{\l\})$, where $(W_a,\BS_a)$ is an affine Coxeter system, $\d$ is a length-preserving automorphism of $W_a$ and $\{\l\}$ is a Weyl group orbit in the coweight lattice of the root system of $(W_a,\BS_a)$. A Tits datum gives rise to a Coxeter datum by forgetting the arrows in the local Dynkin diagram. There may be different Tits data corresponding to the same Coxeter datum.

We define the Tits datum associated with the pair $(\bG,\{\mu\})$ as follows. Let $\tilde{\Delta}_{\bG}$ be the local Dynkin diagram associated with $\bG_{\breve F}$ (cf. \cite{Tits1979}) and let $\d_{\bG}$ be the diagram automorphism on $\tilde{\Delta}_{\bG}$ induced by $\s$. Let $\underline{\mu_{\ad}}$ be the image of $\mu$ under the natural map $X_*(T)\rightarrow X_*(T)_{\G_0} \rightarrow X_*(T_{\ad})_{\G_0}$. Note that $X_*(T_{\ad})_{\G_0}$ is torsion free. By \cite[Lemma 15]{HR}, we have a natural injection $ X_*(T_{\ad})_{\G_0} \hookrightarrow X_*$. We also denote by $\underline{\mu_{\ad}}$ its image in $X_*$. We call $(\tilde{\Delta}_{\bG}, \d_{\bG}, \{\underline{\mu_{\ad}}\})$ the Tits datum associated with $(\bG,\{\mu\})$. The Coxeter datum associated with $(\bG,\{\mu\})$ is the one corresponding to $(\tilde{\Delta}_{\bG}, \d_{\bG}, \{\underline{\mu_{\ad}}\})$.

A Tits datum $(\tilde{\Delta}, \d,\{\l\})$ over $F$ is called {\it of extended Lubin-Tate type} if every quasi-simple component on which $\l$ is non-central is of the form $(\mathrm{Res}_{F_d/F}({A}_n, \id), \{(\omega_1^{\vee},0,\ldots,0)\})$, where $F_d/F$ is the unramified extension of degree $d$. Note that this definition is slightly different from that in \cite[\S2.6]{GHR}. In fact, if $\l$ is central, according to our definition, the associated Tits datum is of extended Lubin-Tate type.

It is well known that if the Tits datum associated with $(\bG,\{\mu\})$ is of extended Lubin-Tate type, then $\dim X(\mu,b)_K = 0$ for any $[b]\in B(\bG,\{\mu\})$ and any $\s$-stable $K\subseteq \tilde{\BS}$ with $W_K$ finite.

Following \cite[Definition 3.1]{GHN2}, we say that $(\bG, \{\mu\})$ is \textit{fully Hodge-Newton decomposable}, if every non-basic $\s$-conjugacy class $[b]\in B(\bG,\{\mu\})$ is Hodge-Newton decomposable with respect to some proper standard Levi subgroup.

We would like to point out the following observation. By definition, $[b_{\mu,\indec}]$ is the unique maximal Hodge-Newton indecomposable element. Thus $(\bG,\{\mu\})$ is fully Hodge-Newton decomposable if and only if $[b_{\mu,\mathrm{basic}}]=[b_{\mu,\indec}]$.

\begin{proposition}\label{prop:nu0superbasic}
Suppose $\bG$ is quasi-simple over $F$. Then the following conditions are equivalent.
\begin{enumerate}
\item $(\bG, \{\mu\})$ is fully Hodge-Newton decomposable and the basic conjugacy class $[b_{\mu,\mathrm{baisc}}]$ is superbasic.
\item The Tits datum associated with $(\bG, \{\mu\})$ is of extended Lubin-Tate type.
\end{enumerate}

\end{proposition}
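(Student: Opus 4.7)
The plan is to prove the two implications separately, with $(1)\Rightarrow(2)$ carrying the main content.

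For $(2)\Rightarrow(1)$, I would proceed by direct verification. After passing to the adjoint quotient via the natural bijection $B(\bG,\{\mu\})\cong B(\bG_{\ad},\{\mu_{\ad}\})$ (which preserves both invariants), we may assume the Tits datum is either the trivial one (central $\mu$, where the quasi-simple hypothesis reduces us to a torus and both conditions are vacuous) or that of $(\mathrm{Res}_{F_d/F}(\mathrm{PGL}_{n+1}),\{\omega_1^\vee\})$. In the Lubin-Tate case the constraints $\k(\nu)=\k(\mu)$ and $\nu\le\mu^\diamond=\tfrac{1}{n+1}(1,\dots,1)$ force $\nu=\mu^\diamond$, so $B(\bG,\{\mu\})=\{[b_{\mu,\mathrm{basic}}]\}$ and fully Hodge-Newton decomposability holds trivially; moreover the basic slope $1/(n+1)$ yields $\bJ_{b_{\mu,\mathrm{basic}}}$ Morita equivalent to the unit group of a degree-$(n+1)$ division algebra over $F_d$, which is anisotropic modulo center, so $\de(b_{\mu,\mathrm{basic}})=\sharp(\BS/\<\s_0\>)$ and the basic class is superbasic.

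For $(1)\Rightarrow(2)$, the strategy is first to constrain the Dynkin type using superbasicity and then to constrain $\{\mu\}$ using fully Hodge-Newton decomposability. The $\s$-centralizer $\bJ_{b_{\mu,\mathrm{basic}}}$ is the inner form of $\bG$ prescribed by $\k(\mu)$, and superbasicity asserts that it is anisotropic modulo center over $F$. Over a non-archimedean local field, the only quasi-simple groups admitting such an inner form are those whose absolute Dynkin type is a disjoint union of type-$A$ diagrams, so writing $\bG=\mathrm{Res}_{F'/F}(\bG')$ with $\bG'$ absolutely simple of type $A_{m-1}$, one obtains $(\tilde{\Delta}_{\bG},\d_{\bG})=([F':F]\cdot\tilde A_{m-1},\text{cyclic rotation})$. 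The minuscule-type restriction on $\mu$ forced by fully Hodge-Newton decomposability, combined with the numerical constraint $\gcd(\k(\mu),m)=1$ coming from superbasicity (which pins the basic slope's denominator at exactly $m$), then pins down $\{\underline{\mu_{\ad}}\}=\{\omega_1^\vee\}$ up to the outer Dynkin automorphism (which yields the same Tits datum); the non-minuscule candidates are ruled out by appealing to the classification of fully Hodge-Newton decomposable pairs in \cite[Theorem 2.5]{GHN2}.

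The main obstacle I expect is the case analysis in the first step of $(1)\Rightarrow(2)$: rigorously eliminating outer type-$A$ (unitary) and all non-type-$A$ possibilities from the superbasic hypothesis alone, and then within type $A$ cutting the minuscule coweights down to $\omega_1^\vee$. One clean route is to invoke the full classification of fully Hodge-Newton decomposable pairs from \cite{GHN2} and filter by the superbasic criterion; a more self-contained alternative uses the Newton-polygon and Kottwitz-invariant analysis of \S\ref{sec:2.2} together with the essential-gap machinery already established in \S\ref{sec:2.3}, applied to the slopes of the basic class in each irreducible case.
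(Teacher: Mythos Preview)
Your argument for $(2)\Rightarrow(1)$ contains a genuine error: the claim that $B(\bG,\{\mu\})=\{[b_{\mu,\mathrm{basic}}]\}$ in the Lubin--Tate case is false. For $\mathrm{Res}_{F_d/F}(\GL_{n+1})$ with $\mu=(\omega_1^\vee,0,\ldots,0)$, the $\sigma_0$-average is $\mu^\diamond=\tfrac{1}{d}(\omega_1^\vee,\ldots,\omega_1^\vee)$, which in each factor reads $(\tfrac{1}{d},0,\ldots,0)$; this is \emph{not} the basic Newton point $(\tfrac{1}{d(n+1)},\ldots,\tfrac{1}{d(n+1)})$, and $B(\bG,\{\mu\})$ typically has many elements (already for $\GL_2$, $\mu=(1,0)$, it has two). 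Fully Hodge--Newton decomposability does hold in the Lubin--Tate case, but not for the reason you give: one checks directly, or reads it off the classification table in \cite[Theorem~3.5]{GHN2}, that every non-basic class is HN decomposable with respect to a proper Levi. Your superbasicity argument for the basic class is fine. The paper simply declares this direction ``obvious''.

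For $(1)\Rightarrow(2)$, your strategy and the paper's are essentially the same: both ultimately appeal to the classification of fully HN decomposable pairs in \cite[Theorem~3.5, \S3.6]{GHN2}, filtered by the superbasic condition on the basic class. The paper organizes the reduction differently: it first writes $\bG_{\ad}=\bG_1\times\cdots\times\bG_r$ along the $\sigma$-orbit of absolutely simple factors over $\breve F$ and passes to an absolutely quasi-simple $\bG'$ with $\mu'=\sum_j\sigma_0^{j}(\mu_{r-j})$ via the poset isomorphism $B(\bG,\{\mu\})\cong B(\bG',\{\mu'\})$, and only then consults the table. Your plan of first using superbasicity to force inner type $A$ and then constraining $\mu$ is a legitimate variant, but be aware that your $\gcd(\kappa(\mu),m)=1$ criterion alone does not single out $\omega_1^\vee$: every $\omega_i^\vee$ with $\gcd(i,m)=1$ gives a superbasic basic class, and it is the fully HN decomposable condition (hence the table, or an equivalent direct argument) that cuts this down to $i\in\{1,m-1\}$.
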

\begin{proof}
$(2) \implies (1)$ is obvious. Let us prove $(1) \implies (2)$.

We may assume that $\bG$ is adjoint. Following \cite[\S3.4]{GHN2}, we write $\bG = \bG_1\times\bG_2\times\cdots\times \bG_r$ such that $\s$ induces an isomorphism from $\bG_i$ to $\bG_{i+1}$, $i=1,2,\ldots,r-1$ and from $\bG_{r}$ to $\bG_1$. We also write $\mu=(\mu_1,\mu_2,\ldots,\mu_r)$. Let $\bG'$ be the simple reductive group over $ F$ such that $\bG'(\breve F) = \bG_r(\breve F)$ and the Frobenius morphism on $\bG'(\breve F)$ is given by $\s'=\s^r\vert_{\bG_r}$.

We have the isomorphism of posets
$$B(\bG,\{\mu\}) \tilde\longrightarrow B(\bG',\{\mu'\})$$
sending $[(b_1,b_2,\ldots,b_r)]$ to $[b_r\s(b_{r-1})\cdots \s^{r-1}(b_1)]$. Here 
$$\mu' = \mu_r+\s_0(\mu_{r-1})+\cdots+\s_0^{r-1}(\mu_1).$$ 
Moreover, we have a natural bijection $\tS/\<\s\> \cong \tS_r/\<\s^{r}\>$. Then the assumption implies that $(\bG',\mu')$ is fully Hodge-Newton decomposable and $[b_{\mu',\indec}]$ is superbasic. Tracing the classification table in \cite[Theorem 3.5 and \S3.6]{GHN2}, we deduce that the Tits datum associated with $(\bG', \{\mu'\} )$ is $({A}_n,\id,\{\o_1^{\vee}\})$. Therefore, the Tits datum associated with $(\bG, \{\mu\})$ is of extended Lubin-Tate type. This completes the proof.
\end{proof}

\subsection{Essential gap between $[b_{\mu,\indec}]$ and $[b_{\mu,\max}]$}
In this subsection, we assume that $\bG$ is adjoint. Let $\bH$ and $\g$ be as in $\S\ref{sec:1.2}$. Recall that
$$\text{Ess-gap}([b_{\mu,\indec}],[b_{\mu,\max}]) = \mathbf{i}([b_{\mu,\indec}],[b_{\mu,\max}])+\mathbf{b_1}([b_{\mu,\indec}],[b_{\mu,\max}]),$$
where $\mathbf{i}([b_{\mu,\indec}],[b_{\mu,\max}])$ is the number of ``interior lattice points'' between $\nu_{\mu,\indec}$ and $\nu_{\mu,\max}$, and $\mathbf{b_1}([b_{\mu,\indec}],[b_{\mu,\max}])$ is the number of ``lattice points'' on $\nu_{\mu,\indec}$ that do not touch $\nu_{\mu,\max}$ (see \S \ref{sec:2.3} for the exact definition). 

\begin{lemma}\label{lem:ibibm=0}
We have $\mathbf{i}([b_{\mu,\indec}],[b_{\mu,\max}])=0$.
\end{lemma}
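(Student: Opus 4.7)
The plan is to prove $\mathbf{i}([b_{\mu,\indec}],[b_{\mu,\max}])=0$ by contradiction. After reducing to the case where $\bG$ is adjoint and absolutely simple (using that all invariants in sight transport along the bijection $B(\bG,\{\mu\})\cong B(\bG_{\ad},\{\mu_{\ad}\})$ from \S\ref{sec:1.4}, and then the product reduction from the proof of Proposition \ref{prop:nu0superbasic}), I will assume $\mathbf{i}>0$ and produce a Hodge--Newton indecomposable class strictly above $[b_{\mu,\indec}]$ in $B(\bG,\{\mu\})$, contradicting the maximality of $[b_{\mu,\indec}]$ among HN-indecomposable elements recorded in \S\ref{sec:1.4}.

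I would first extract a combinatorial witness of $\mathbf{i}>0$. From $\mathbf{i}=\mathrm{length}-\mathbf{b_2}$ together with the length formula (\ref{eq:1.2}) and the case analysis used in the proof of Proposition \ref{prop:ige0}, the assumption $\mathbf{i}>0$ forces the existence of some $o\in\BS/\langle\sigma_0\rangle$ with $o\notin J(\nu_{\mu,\max}-\nu_{\mu,\indec})$ whose local ceiling contribution strictly exceeds the indicator that $o$ is a lattice orbit of $\nu_{\mu,\max}$. Unwinding the definitions, this places a strict interior integer point in the half-open interval $(\langle\underline{\lambda}-\nu_{\mu,\max},\omega_o\rangle,\langle\underline{\lambda}-\nu_{\mu,\indec},\omega_o\rangle]$, giving room to enlarge $\nu_{\mu,\indec}$ in the direction dual to $\omega_o$ without reaching $\nu_{\mu,\max}$.

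Using this witness, I would construct a rational dominant Newton point $\nu'$ with $\nu_{\mu,\indec}<\nu'\le\nu_{\mu,\max}$ whose associated class $[b']\in B(\bG,\{\mu\})$ remains HN-indecomposable. Here I would use the reformulation of HN-indecomposability as the covering condition $I(\nu)\cup\supp(\mu^\diamond-\nu)=\BS$ (with $\supp$ denoting support in the simple coroot basis), and perturb $\nu_{\mu,\indec}$ along the simple coroots indexed by the orbit $o$ in a way that crosses the interior integer value found above, preserves this covering union, and lands in the discrete subset of $V$ corresponding to elements of $B(\bG,\{\mu\})$. The resulting $[b']>[b_{\mu,\indec}]$ that is HN-indecomposable contradicts the definition of $[b_{\mu,\indec}]$.

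The main obstacle lies in this construction step: the perturbed $\nu'$ must simultaneously satisfy the Kottwitz invariant and the rationality/integrality constraints for a genuine $\sigma$-conjugacy class in $B(\bG,\{\mu\})$, while retaining HN-indecomposability against every proper $\sigma_0$-stable $J\subsetneq\BS$. I anticipate that verifying both conditions uniformly will require either invoking the explicit description of $[b_{\mu,\indec}]$ from \cite[\S4.1]{HNY} or carrying out a case-by-case analysis by Dynkin type, with particular care in the non-fully-Hodge--Newton-decomposable cases.
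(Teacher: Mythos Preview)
Your approach is genuinely different from the paper's, and unnecessarily indirect. The paper does not argue by contradiction at all: it simply cites the explicit constructions of $\nu_{\mu,\max}$ and $\nu_{\mu,\indec}$ from \cite[\S3.5]{HN2018} and \cite[\S4.1]{HNY} to obtain, for every $o\in\BS/\langle\sigma_0\rangle$, the chain
\[
\langle\underline\mu,\omega_o\rangle \;\ge\; \langle\nu_{\mu,\max},\omega_o\rangle \;\ge\; \langle\nu_{\mu,\indec},\omega_o\rangle \;\ge\; \max\{a\in \langle\underline\lambda,\omega_o\rangle+\BZ \mid a<\langle\underline\mu,\omega_o\rangle\}.
\]
This says that for each $o$ the two values $\langle\nu_{\mu,\max},\omega_o\rangle$ and $\langle\nu_{\mu,\indec},\omega_o\rangle$ lie in the same half-open ``slab'' between consecutive points of $\langle\underline\lambda,\omega_o\rangle+\BZ$. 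Hence the ceiling summand in (\ref{eq:1.2}) is $1$ exactly when $\langle\underline\lambda-\nu_{\mu,\max},\omega_o\rangle\in\BZ$ and $\langle\nu_{\mu,\max},\omega_o\rangle>\langle\nu_{\mu,\indec},\omega_o\rangle$, and $0$ otherwise. That count is precisely $\mathbf{b_2}$, so $\mathbf{i}=\mathrm{length}-\mathbf{b_2}=0$ on the nose.

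Your plan has a real gap at the construction step, and you have correctly identified it. Perturbing $\nu_{\mu,\indec}$ upward along the coroots in a single orbit $o$ will in general destroy dominance (pairings with adjacent simple roots go negative), shrink $I(\nu)$, and need not land on a genuine Newton point in $B(\bG,\{\mu\})$; repairing all of this while keeping $I(\nu')\cup\supp(\mu^\diamond-\nu')=\BS$ is exactly the content you defer to \cite[\S4.1]{HNY} or to a type-by-type check. But once you invoke \cite[\S4.1]{HNY}, the displayed inequality above is already available and finishes the lemma in two lines, making the contradiction scaffolding and the delicate construction of $[b']$ entirely superfluous. I would recommend abandoning the contradiction route and using the bound directly.
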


\begin{proof}
We may assume $\bG$ is quasi simple over $F$. If $\mu$ is central, then the statement is trivial. Assume $\mu$ is non-central. Let $\l\in X_*(T )$ be an arbitrary dominant coweight such that $\k(\l) = \k(\mu)\cdot\k(\g)$ and $\l^{\diamond}\ge \mu^{\diamond}$. Let $o\in \BS/\<\s_0\>$. By the construction of $[b_{\mu,\indec}]$ and $[b_{\mu,\max}]$ (cf. \cite[\S3.5]{HN2018}, \cite[\S4.1]{HNY}), we have 
$$ \<  \underline\mu ,\o_o\>\ge \<  \nu_{\mu,\max},\o_o\>\ge \<  \nu_{\mu,\indec},\o_o\>\ge \max\{ a\mid a\in  \<\underline\l,\o_o\>+\BZ,  a< \<\mu,\o_o\> \}.$$


Then one can easily deduce that
\begin{align*}
\lceil \<\underline\l  - \nu_{\mu,\indec} ,\o_o\>\rceil - \lceil\<\underline\l   - \nu_{\mu,\max} ,\o_o\> \rceil =\begin{cases}
    1 & \text{ if }\<\underline\l-\nu_{\mu,\max},\o_o\>\in\BZ \text{ and } \< \nu_{\mu,\max},\o_o\>> \< \nu_{\mu,\indec},\o_o\>;\\
    0 &\text{otherwise}.
\end{cases}
\end{align*}
Recall that
$$\mathbf {b_2}([b_{\mu,\indec}],[b_{\mu,\max}]) = \sharp \{o\in\BS/\<\s_0\> \mid \<\underline\l-\nu_{\mu,\max},\o_o\>\in\BZ, \< \nu_{\mu,\max},\o_o\>> \< \nu_{\mu,\indec},\o_o\> \}  .$$
It follows that
$$\mathbf{i}([b_{\mu,\indec}],[b_{\mu,\max}]) = \sum_{o\in\BS/\<\s_0\>} \bigl(\lceil \<\underline\l  - \nu_{\mu,\indec} ,\o_o\>\rceil - \lceil\<\underline\l   - \nu_{\mu,\max} ,\o_o\> \rceil\bigr) - \mathbf{b_2}([b_{\mu,\indec}],[b_{\mu,\max}]) =0.$$
\end{proof}

\begin{corollary}\label{lem:nuinum}
The following conditions are equivalent.
\begin{enumerate}
    \item  $\text{Ess-gap}([b_{\mu,\indec}],[b_{\mu,\max}]) = 0$.
    \item  $\mathbf {b_1}(b_{\mu,\indec},b_{\mu,\max}) = 0$.
    \item For every $o\in \BS/\<\s_0\>$ such that $\<  \nu_{\mu,\indec},\o_o\> =\<  \l([b_{\mu,\indec}]),\o_o\> $, one has $\<\nu_{\mu,\max}-\nu_{\mu,\indec},\o_o\> = 0$. In other words, $\nu_{\mu,\max}$ touches every ``lattice point'' of $\nu_{\mu,\indec}$.
    
\end{enumerate}

\end{corollary}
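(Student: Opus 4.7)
The plan is to derive this corollary essentially as a direct bookkeeping consequence of the results already established in Section~\ref{sec:2.3}, together with Lemma~\ref{lem:ibibm=0}. Since all three ``interior'', ``upper'' and ``lower'' lattice-point counts have been set up abstractly, the equivalences should reduce to unpacking the definitions.

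First I would establish (1) $\iff$ (2). By the identity (\ref{eq:2.1}), we have
\[
\text{Ess-gap}([b_{\mu,\indec}],[b_{\mu,\max}]) \;=\; \mathbf{i}([b_{\mu,\indec}],[b_{\mu,\max}]) + \mathbf{b_1}([b_{\mu,\indec}],[b_{\mu,\max}]).
\]
Lemma~\ref{lem:ibibm=0} tells us that the first summand vanishes, so $\text{Ess-gap}([b_{\mu,\indec}],[b_{\mu,\max}])=\mathbf{b_1}([b_{\mu,\indec}],[b_{\mu,\max}])$. This gives (1) $\iff$ (2) immediately. Equivalently, one can cite Proposition~\ref{prop:ige0} together with Lemma~\ref{lem:ibibm=0}.

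For (2) $\iff$ (3) I would simply unfold the definition of $\mathbf{b_1}$ from \S\ref{sec:2.3}:
\[
\mathbf{b_1}([b_{\mu,\indec}],[b_{\mu,\max}]) = \sharp\bigl(\{o\in\BS/\<\s_0\> \mid \<\nu_{\mu,\indec},\o_o\> = \<\l([b_{\mu,\indec}]),\o_o\>\} - J(\nu_{\mu,\max}-\nu_{\mu,\indec})\bigr).
\]
Because $[b_{\mu,\indec}] \le [b_{\mu,\max}]$, the vector $\nu_{\mu,\max}-\nu_{\mu,\indec}$ is a non-negative rational combination of positive coroots, hence $\<\nu_{\mu,\max}-\nu_{\mu,\indec},\o_o\>\ge 0$ for every $o$, with equality precisely when $o\in J(\nu_{\mu,\max}-\nu_{\mu,\indec})$. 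So the cardinality above equals zero if and only if every $o\in\BS/\<\s_0\>$ satisfying $\<\nu_{\mu,\indec},\o_o\>=\<\l([b_{\mu,\indec}]),\o_o\>$ also lies in $J(\nu_{\mu,\max}-\nu_{\mu,\indec})$, i.e.\ satisfies $\<\nu_{\mu,\max}-\nu_{\mu,\indec},\o_o\>=0$. This is exactly (3).

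There is no real obstacle here: all the content sits in Lemma~\ref{lem:ibibm=0} (whose proof is already given) and in the interpretation of $J(\nu_{\mu,\max}-\nu_{\mu,\indec})$ as the set of orbits on which $\nu_{\mu,\max}$ touches $\nu_{\mu,\indec}$. The proof is therefore a short paragraph combining (\ref{eq:2.1}), Lemma~\ref{lem:ibibm=0}, and the definition of $\mathbf{b_1}$.
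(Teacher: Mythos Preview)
Your proposal is correct and matches the paper's proof essentially verbatim: the paper also derives (1)$\iff$(2) from Proposition~\ref{prop:ige0} and Lemma~\ref{lem:ibibm=0}, and (2)$\iff$(3) directly from the definition of $\mathbf{b_1}$. Your slightly more explicit use of (\ref{eq:2.1}) plus Lemma~\ref{lem:ibibm=0} to get $\text{Ess-gap}=\mathbf{b_1}$ is an equivalent (and arguably cleaner) way to phrase the first equivalence.
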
   

\begin{proof}
The equivalence of (1) and (2) follows from Proposition \ref{prop:ige0} and Lemma \ref{lem:ibibm=0}. The equivalence of (2) and (3) follows from the definition of $\mathbf {b_1}([b_{\mu,\indec}],[b_{\mu,\max}]) $.\qedhere
\end{proof}

\subsection{Main result} For any $[b]\in B(\bG,\{\mu\})$, there is a unique minimal $\s_0$-stable subset $J\subseteq \BS$ such that $(\mu,[b])$ is Hodge-Newton decomposable with respect to $M_J$. In fact, $J$ equals the union of $I(\nu_b)=\{i\in\BS\mid \<\nu_b,\a_i\> = 0\}$ and the set $\{i\in\BS\mid \<\mu^{\diamond} - \nu_b,\o_i\>\ne 0 \}$.


We now state the main Theorem of this paper.
\begin{theorem}\label{thm:main}
Let $[b] \in B(\bG,\{\mu\})$. Let $J$ be the minimal $\s_0$-stable subset of $\BS$ such that $(\mu,[b])$ is Hodge-Newton decomposable with respect to $M_J$. Let $\mathfrak{P}_J^{\s}$ be the set of $\s$-stable parabolic subgroups over $\breve F$ that conjugate to $P_J$ and contain $T$. Let $K$ be a $\s$-stable subset of $\tS$ such that $W_K$ is finite. Then the following statements are equivalent.
\begin{enumerate}
\item $\dim X(\mu,b)_K = 0$.

\item For some (or equivalently, any) $P\in \mathfrak{P}_J^{\s}$ with semi-standard Levi decomposition $P=MN$, the Tits datum associated with $( M , \{\mu_{P}\})$ is of extended Lubin-Tate type. Here, $\{\mu_{P}\}$ is as in \S\ref{sec:HN}.

\item $[b_{\mu,\max}]$ is $\mu$-ordinary and $\text{Ess-gap}([b],[b_{\mu,\max}])=0$.

\end{enumerate}
\end{theorem}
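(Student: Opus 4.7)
The plan is to establish the cycle $(2) \Rightarrow (1) \Rightarrow (3) \Rightarrow (2)$. The first two implications follow quickly from results already proved in the paper, while the final implication is the heart of the matter, combining the combinatorics of essential gaps from Section \ref{sec:2.1}--\ref{sec:2.3} with the Hodge-Newton reduction in Proposition \ref{prop:levi}. At the outset I would note that the ``some'' versus ``any'' ambiguity in statement (2) is harmless: the bijection (\ref{eq:3.2}) together with Lemma \ref{lem:tauJ} shows that for $P,P' \in \mathfrak{P}_J^\s$, the pairs $(M,\{\mu_P\})$ and $(M',\{\mu_{P'}\})$ have isomorphic Tits data.

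For $(2) \Rightarrow (1)$, apply Theorem \ref{thm:HN-decom} to write
$$X(\mu, b)_K \cong \bigsqcup_{P=MN \in \mathfrak{P}_J^\s / W_K^\s} X^{M}(\mu_P, b_P)_{K_M};$$
each summand is zero-dimensional by the standard fact (recalled in \S\ref{sec:adlv}) that a Tits datum of extended Lubin-Tate type produces a zero-dimensional affine Deligne--Lusztig variety. For $(1) \Rightarrow (3)$, I would invoke Proposition \ref{prop:dimxmub}. Since $\s_0$ permutes the positive roots and hence fixes $2\rho$, we have $\<\underline\mu - \mu^\diamond, 2\rho\> = 0$, so the bound rewrites as
$$0 = \dim X(\mu, b)_K \ge \<\mu^\diamond - \nu_{\mu,\max}, 2\rho\> + \text{Ess-gap}([b], [b_{\mu,\max}]).$$
Both summands are non-negative (the first because $\mu^\diamond - \nu_{\mu,\max}$ is a non-negative combination of positive coroots paired against $2\rho$, the second by Lemma \ref{lem:2.1}), so both must vanish. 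The first vanishing forces $\nu_{\mu,\max} = \mu^\diamond$, i.e.\ $[b_{\mu,\max}]$ is $\mu$-ordinary, which together with $\text{Ess-gap}([b],[b_{\mu,\max}])=0$ is exactly (3).

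For $(3) \Rightarrow (2)$, the minimality of $J$ guarantees that $(\mu_P, [b_P]_M)$ is Hodge-Newton indecomposable in $M$. Proposition \ref{prop:levi} then transfers the two conditions of (3) to the Levi side: $[b_{\mu_P,\max}]_M$ is $\mu_P$-ordinary, and $\text{Ess-gap}_M([b_P]_M, [b_{\mu_P,\max}]_M) = 0$. Additivity of the essential gap along $[b_P]_M \le [b_{\mu_P,\indec}]_M \le [b_{\mu_P,\max}]_M$ then forces both sub-gaps to vanish, and Corollary \ref{lem:nuinum} turns $\text{Ess-gap}_M([b_{\mu_P,\indec}]_M, [b_{\mu_P,\max}]_M) = 0$ into the concrete combinatorial statement that $\nu_{\mu_P,\max}$ meets every ``lattice point'' of $\nu_{\mu_P,\indec}$. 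By Proposition \ref{prop:nu0superbasic}, extended Lubin-Tate type for $(M,\{\mu_P\})$ is equivalent to $(M,\{\mu_P\})$ being fully Hodge-Newton decomposable with superbasic basic class on each quasi-simple component over $F$. Feeding the HN-indecomposability of $[b_P]_M$ together with the lattice-point constraint into the classification of fully HN-decomposable pairs in \cite[Theorem 3.5, \S3.6]{GHN2}, one should be able to exclude every possibility other than the extended Lubin-Tate one.

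The principal obstacle is this last classification-driven combinatorial step: systematically checking, against the tables of \cite{GHN2}, that any non-extended-Lubin-Tate fully HN-decomposable pair $(M,\{\mu_P\})$ either admits an HN-indecomposable class whose lattice points are not all touched by $\nu_{\mu_P,\max}$ or carries a basic class that fails to be superbasic, and therefore violates (3). This amounts to a finite but delicate type-by-type verification, and is where essentially all of the content of the theorem lies.
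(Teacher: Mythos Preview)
Your overall architecture matches the paper's: the cycle $(2)\Rightarrow(1)\Rightarrow(3)\Rightarrow(2)$, with the first two arrows coming from Theorem~\ref{thm:HN-decom} and Proposition~\ref{prop:dimxmub} respectively, and the reduction to the Levi via Proposition~\ref{prop:levi} for the third. Up through your application of Corollary~\ref{lem:nuinum} you are tracking the paper exactly.

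Where you diverge is in the final step, and there your plan is both more laborious than necessary and, as stated, not quite coherent. You propose to feed the lattice-point constraint into the classification tables of \cite{GHN2} and carry out a ``finite but delicate type-by-type verification.'' But notice that you have not established that $(M,\{\mu_P\})$ is fully Hodge--Newton decomposable, so those tables do not yet apply; and your fallback alternative (``carries a basic class that fails to be superbasic'') is not tied to any hypothesis you actually have in hand.

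The paper avoids all of this with a short uniform argument. After reducing to $\bG$ adjoint quasi-simple over $F$ with $\mu$ non-central and $(\mu,[b])$ Hodge--Newton indecomposable, the point is that Corollary~\ref{lem:nuinum} together with $\nu_{\mu,\max}=\mu^{\diamond}$ forces $\nu_{\mu,\indec}$ to have \emph{no} lattice points at all: any lattice point $o$ would satisfy $\langle\mu^{\diamond}-\nu_{\mu,\indec},\o_o\rangle=0$, which is incompatible with the Hodge--Newton indecomposability of $[b_{\mu,\indec}]$. Then formula~(\ref{eq:1.3}) gives $\de(b_{\mu,\indec})=\sharp(\BS/\langle\s_0\rangle)$, i.e.\ $[b_{\mu,\indec}]$ is superbasic, hence basic, hence equal to $[b_{\mu,\mathrm{basic}}]$. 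This is exactly the statement that $(\bG,\{\mu\})$ is fully Hodge--Newton decomposable with superbasic basic class, and Proposition~\ref{prop:nu0superbasic} (which you already cite) finishes immediately. The only place the \cite{GHN2} classification enters is inside the proof of Proposition~\ref{prop:nu0superbasic}, which is already done; no further case check is required.
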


\begin{proof}[Proof of Theorem \ref{thm:main}]
The equivalence of ``some" and ``any" in (2) follows from the commutative diagram (\ref{eq:3.1}) and the definition of $\mu_{P}$.
    
$(1) \implies (3)$ follows directly from Proposition \ref{prop:dimxmub}. 

$(2) \implies (1)$ follows directly from Theorem \ref{thm:HN-decom}.

Let us prove $(3) \implies (2)$. We may assume that $\bG$ is adjoint. For any $P\in \mathfrak{P}_J^{\s}$ with semi-standard Levi decomposition $P=MN$, by Proposition \ref{prop:levi}, the condition (3) holds for the Levi subgroup $M$. Hence we may assume that $J = \BS$ and $(\mu,[b])$ is Hodge-Newton indecomposable. 

Since $[b]\le [b_{\mu,\indec}]$, by the additive property of the essential gap (see \S\ref{sec:2.1}), we have $\text{Ess-gap}([b_{\mu,\indec}],[b_{\mu,\max}])=0$. By the definition of extended Lubin-Tate type, we may assume that $\bG$ is quasi-simple over $F$ and $\mu$ is non-central. Since $\nu_{\mu,\max} =\mu^{\diamond}$, using Corollary \ref{lem:nuinum}, we obtain that $\nu_{\mu,\indec}$ does not have ``lattice points", in other words, for any $o\in\BS/\<\s_0\>$, we have $\< \nu_{\mu,\indec},\o_o\> \ne  \< \l ( [b_{\mu,\indec}]),\o_o\>$. By (\ref{eq:1.3}), this implies that $[b_{\mu,\indec}]$ is superbasic. In particular, we have $[b_{\mu,\indec}] = [b] = [b_{\mu,\mathrm{basic}}]$ and $(\bG, \{\mu\})$ is fully Hodge-Newton decomposable. The statement follows from Proposition \ref{prop:nu0superbasic}.
\end{proof}

\begin{remark}
\begin{enumerate}
    
    \item The dimension of $X(\mu,b)_K$ depends on the level $K$. However, conditions (2) (3) of Theorem \ref{thm:main} do not.

    \item The equivalence $(1) \iff (2)$ for the basic $\s$-conjugacy class $[b]$ is proved in \cite[Theorem 4.1]{GHR}. Our proof is independent of the proof in \cite{GHR}.
    
    \item Using condition (3) of Theorem \ref{thm:main}, one directly sees that the set $\{[b]\mid\dim X(\mu,b)=0\}$ is saturated. That is, if $\dim X(\mu, b) = 0$, then $\dim X(\mu, b') = 0$ for all $[b'] \in B(\bG,\{\mu\})$ such that $[b] \le [b']$. However, the set $\{[b]\mid \dim X(\mu,b)=0\}$ does not have a unique minimum in general. For example, assume that $\bG=\GL_4$ and $\mu=\o_2^{\vee}$. We have $\{[b]\mid \dim X(\mu,b)=0\} = \{(1,1,0,0),(1,\frac{1}{2},\frac{1}{2},0),(1,\frac{1}{3},\frac{1}{3},\frac{1}{3}),(\frac{2}{3},\frac{2}{3},\frac{2}{3},0)\} $, which does not have a unique minimum.

    \item In the situation of condition (2) of Theorem \ref{thm:main}, using Theorem \ref{thm:HN-decom} and \cite[Proposition 5.3]{He14}, we get 
    \begin{align*}
        X(\mu,b) \cong \bigcup_{P =MN\in \mathfrak{P}_J^{\s}}X_{\t_M}^M(b_{P})\cong\bigcup_{P =MN\in \mathfrak{P}_J^{\s}}J_{ \t_M }(F)/J_{ \t_M }(F)\cap\breve\CI \cong\bigcup_{P =MN\in \mathfrak{P}_J^{\s}}  \Omega_M^{\s},
    \end{align*}
    where $\t_M$ is the length zero element in $\Adm^{M}(\{\mu_{P}\})$, $J_{\t_M}(F)$ is the $\s$-centralizer of $\dot{\t_M}$ in $M(\breve F)$ and $\Omega_M^{\s}$ is the set $\s$-invariant length-zero elements in $\tW_M$. This result can be generalized to the general parahoric level $K$ using EKOR strata. 
\end{enumerate}
\end{remark}

\begin{corollary}
There exists some $[b]\in B(\bG)$ with $\dim X(\mu,b)=0$ if and only if $[b_{\mu,\max}]$ is $\mu$-ordinary.
\end{corollary}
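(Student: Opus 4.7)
The plan is to deduce this as a direct corollary of Theorem \ref{thm:main}, by taking $[b] = [b_{\mu,\max}]$ as the candidate on the existence side.

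For the ``only if'' direction, suppose $[b]\in B(\bG,\{\mu\})$ satisfies $\dim X(\mu,b)=0$. Then the implication $(1)\Rightarrow (3)$ of Theorem \ref{thm:main} (which itself follows from Proposition \ref{prop:dimxmub}) immediately gives that $[b_{\mu,\max}]$ is $\mu$-ordinary. Nothing more is needed here.

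For the ``if'' direction, suppose $[b_{\mu,\max}]$ is $\mu$-ordinary. I would take $[b]=[b_{\mu,\max}]$. Then, directly from the definition
\[\text{Ess-gap}([b],[b_{\mu,\max}]) = \<\nu_{b_{\mu,\max}}-\nu_{b_{\mu,\max}},2\rho\> - \mathrm{length}([b_{\mu,\max}],[b_{\mu,\max}]) = 0,\]
so condition (3) of Theorem \ref{thm:main} is satisfied for this choice of $[b]$. Applying $(3)\Rightarrow (1)$ of Theorem \ref{thm:main} then yields $\dim X(\mu, b_{\mu,\max})=0$, producing the desired element of $B(\bG,\{\mu\})$.

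Since both directions reduce to a single application of Theorem \ref{thm:main} once one observes that the essential gap from a class to itself vanishes, there is essentially no obstacle; the only point to record is that $[b_{\mu,\max}]$ always lies in $B(\bG,\{\mu\})$ (by its defining property as the unique maximal element, established in \cite{HN2018}), so that the candidate $[b]=[b_{\mu,\max}]$ is a legitimate choice.
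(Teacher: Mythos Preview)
Your proof is correct. Both you and the paper handle the ``only if'' direction identically, by invoking the implication $(1)\Rightarrow(3)$ of Theorem~\ref{thm:main} (equivalently, Proposition~\ref{prop:dimxmub}).

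For the ``if'' direction the paper takes a more elementary route than you do. Rather than appealing to the implication $(3)\Rightarrow(1)$ of Theorem~\ref{thm:main}---which passes through the Hodge--Newton decomposition and the extended Lubin--Tate classification---the paper simply observes the known dimension formula
\[
\dim X(\mu,b_{\mu,\max}) = \langle \underline\mu - \nu_{\mu,\max}, 2\rho\rangle,
\]
and then, under the $\mu$-ordinary hypothesis $\nu_{\mu,\max}=\mu^\diamond$, computes $\langle \underline\mu - \mu^\diamond, 2\rho\rangle = 0$ (since $2\rho$ is $\sigma_0$-invariant). This shows directly that $[b_{\mu,\max}]$ is the required witness, without invoking any part of the main theorem. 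Your argument is perfectly valid, but it is worth knowing that this direction is essentially free once one remembers the generic-stratum dimension formula, and does not require the classification machinery. (Incidentally, the paper's own proof appears to have the labels ``if'' and ``only if'' interchanged; the computation displayed there establishes the ``if'' direction, and the appeal to Theorem~\ref{thm:main} handles the ``only if''.)
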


\begin{proof}
   The ``if" direction follows from Theorem \ref{thm:main}. For the ``only if" direction, observe that $\dim X(\mu,b_{\mu,\max}) = \<\underline\mu-\nu_{\mu,\max},2\rho\> =  \<\underline\mu-\mu^{\diamond},2\rho\>  = 0$. \qedhere
\end{proof}

Let $P\in\mathfrak{P}_J^{\s}$ and let $z_{P}$ be as in \S\ref{sec:HN}. Define $\s_J  := \mathrm{Ad}(z_{P})^{-1} \circ \s \circ \mathrm{Ad}(z_{P})$. Then $M_J$ is $\s_J$-stable. Let $\tilde{\Sigma}_J$ be the absolute local Dynkin diagram associated with $M_J$ and let $\d_J$ be the diagram automorphism on $\tilde{\Sigma}_J$ induced by $\s_J$. By the commutative diagram \ref{eq:3.1}, the Tits datum $(\tilde{\Sigma}_J, \d_J, \{\mu\})$ is independent (up to automorphism) of the choice of $P\in\mathfrak{P}_J^{\s}$. As a consequence, we can restate Condition (2) of Theorem \ref{thm:main} as

$(2')$ The Tits datum $(\tilde{\Sigma}_J, \d_J, \{\mu\})$ is of extended Lubin-Tate type.

Furthermore, by Lemma \ref{lem:tauJ}, we see that $\s_J$ equals $\Ad(\t_J)\circ \s_0$ as an action on $\tW_J$.



\subsection{Examples}\label{sec:ex}
If $[b_{\mu,\max}]$ is $\mu$-ordinary, then all three conditions in Theorem \ref{thm:main} trivially hold for $[b] = [b_{\mu,\max}]$. In this subsection, We give some examples where $[b]\ne[b_{\mu,\max}]$, $[b]\ne [b_{\mu,\mathrm{basic}}]$ and $\dim X(\mu,b) = 0$.

\subsubsection{Fully Hodge-Newton decomposable case}
The most typical example is when $(\bG,\{\mu\})$ is fully Hodge-Newton decomposable and $[b]$ is non-basic. In this case, we have $\dim X(\mu,b)=0$ by \cite[Theorem 3.3 (3)]{GHN2}. See the Example \ref{ex:5.8}$-$\ref{ex:5.11} below.

\begin{example}\label{ex:5.8}
Assume that the Tits datum associated with $(\bG,\{\mu\})$ is $({A}_3, \z_0,  \{\o_2^{\vee}\})$, where $\z_0$ is the nontrivial diagram automorphism. In this case $\nu_{\mu,\max} = (\frac{1}{ 2},\frac{1}{ 2},-\frac{1}{ 2},-\frac{1}{ 2}) = \mu^{\diamond}$. We have 
$$B(\bG,\{\mu\}) = \{[b_{\mu,\max}], [b_1], [b_{\mu, \mathrm{basic}}]\} ,$$
where $\nu_{b_1} = (\frac{1}{2},0,0,-\frac{1}{2})$. We have $X(\mu,b_1) = 0$. In this case, $J = \{s_2\}$ and one can directly check that $\s_J=\s$ is identity on $\tW_J$ and the Tits datum $(\tilde{\Sigma}_J, \d_J, \{\mu\})$ is $({A}_1,\id,\{\o_1^{\vee}\})$. Moreover, we have $\text{Ess-gap}([b_1],[b_{\mu,\max}]) = 0.$
\end{example}
\begin{example}\label{ex:5.9}
Assume that the Tits datum associated with $(\bG,\{\mu\})$ is $({A}_4,\z_0, \{\o_1^{\vee}\})$, where $\z_0$ is the nontrivial diagram automorphism. In this case $\nu_{\max} = (\frac{1}{ 2},0,0,0,-\frac{1}{ 2}) = \mu^{\diamond}$. We have 
$$B(\bG,\{\mu\}) = \{[b_{\mu,\max}], [b_1], [b_{\mu,\mathrm{basic}}]\} ,$$
where $\nu_{b_1} = (\frac{1}{4},\frac{1}{4},0,-\frac{1}{4},-\frac{1}{4}) $. We have $X(\mu,b_{1}) = 0$. In this case, we have $J = \{s_1,s_4\}$. One can directly check that the Tits datum $(\tilde{\Sigma}_J, \d_J, \{\mu\})$ is $(\mathrm{Res}_{F_2/F}({A}_1,\id),\{(\o^{\vee}_1,0)\})$. Moreover, we have $\text{Ess-gap}([b_1],[b_{\mu,\max}]) = 0$.
\end{example}

\begin{example}\label{ex:5.10}
Assume that the Tits datum associated with $(\bG,\{\mu\})$ is $({A}_5,\Ad(\t_1)\circ \z_0,  \{\o_1^{\vee}\})$, where $\z_0$ is the nontrivial diagram automorphism. In this case $\nu_{\max} = (\frac{1}{ 2},0,0,0,0,-\frac{1}{ 2}) = \mu^{\diamond}$. We have 
$$B(\bG,\{\mu\}) = \{[b_{\mu,\max}], [b_1], [b_{\mu,\mathrm{basic}}]\} ,$$
where $\nu_{b_1} = (\frac{1}{4},\frac{1}{4},0,0,-\frac{1}{4},-\frac{1}{4}) $. Then $X(\mu,b_{1}) = 0$. In this case, we have $J = \{s_1,s_3,s_5\}$. One can directly check that the Tits datum $(\tilde{\Sigma}_J, \d_J, \{\mu\})$ is $(\mathrm{Res}_{F_2/F}({A}_1,\id),\{(\o^{\vee}_1,0)\}) \times ({A}_1,\id,0)$. Moreover, we have $\text{Ess-gap}([b_1],[b_{\mu,\max}]) = 0.$
\end{example}

\begin{example}\label{ex:5.11}
Assume that the Tits datum associated with $(\bG,\{\mu\})$ is $({D}_n,\Ad(\t_1), \{\o_1^{\vee}\})$. In this case $\nu_{\max} = (1,0,0,\ldots,0 ) = \mu $. We have 
$$B(\bG,\{\mu\}) = \{[b_{\mu,\max}], [b_2],[b_3],\ldots,[b_{n-2}], [b_{\mu,\mathrm{basic}}]\} ,$$
where $\nu_{b_i} = \bigl((\frac{1}{i})^{(i)},(0)^{(n-i)} \bigr) $. Here the notation $(a)^{(k)} $ means $(a,a,\ldots,a)$ of length $k$. 
Then $X(\mu,b_{i}) = 0$ for $i = 2,3,\ldots,n-2$. In this case, one can directly check that the Tits datum $(\tilde{\Sigma}_J, \d_J, \{\mu\})$ is $ ({A}_{i-1},\id,\{\o_1^{\vee}\}) \times ({A}_{n-i},\id,\{0\} ) $. Moreover, we have $\text{Ess-gap}([b_i],[b_{\mu,\max}]) = 0$.
\end{example}
\begin{remark}
Example 4.11 is missing in the classification list of fully Hodge-Newton decomposable cases in \cite[Theorem 3.5]{GHN2}.
\end{remark}

\subsubsection{The $\GL_n$ case}
Consider the case $\bG=\GL_n$. Then $[b_{\mu,\max}] $ is $\mu$-ordinary. We identify $X_*(T)$ with $\BZ^{n}$. Write $\mu = (a_1,a_2,\ldots,a_n)$. By the explicit description in \S \ref{sec:2.2}, there is $[b]<[b_{\mu,\max}]$ such that $\text{Ess-gap}([b],[b_{\mu,\max}])=0$ if and only if $a_i-a_{i+1}=1$ for some $i$. For example, we take $n = 5$, $\mu = (2,1,0,-1,-1)$ and $\nu_b = (\frac{3}{2},\frac{3}{2},-\frac{2}{3},-\frac{2}{3},-\frac{2}{3})$. Then we can easily see that $\mathbf i = \mathbf{b_1}= 0$ and the Tits datum $(\tilde{\Sigma}_J, \d_J, \{\mu\})$ is $ ({A}_{1},\id,\{\o_1^{\vee}\}) \times ({A}_{2},\id,\{\o_1^{\vee}\}) $.

\subsection{The $\mu$-ordinary condition}\label{sec:mu-ordinary}
Recall that the action of $\s$ on the apartment $\CA$ induces an affine transformation on $V$ (which depends on the choice of the special vertex of the base alcove $\mathfrak{a}$, see \S\ref{sec:1.1}). Then either $\s(0)=0$ or $\s(0)$ equals some (non-zero) minuscule coweight in $X_*(T_{\ad})_{\G_0}\subseteq V$ (note that $X_*(T_{\ad})_{\G_0}$ is torsion free). If $\bG$ is not adjoint, then $\s(0)$ may not be lifted to an element in $X_*(T)_{\G_0}$.

Recall that for any $v\in V$, we denote $I(v) = \{i \in \BS\mid \<v,\a_i\>=0\} $. By \cite[Corollary 3.6]{HN2018}, the generic $\s$-conjugacy class $[b_{\mu,\max}]$ is $\mu$-ordinary if and only if $\< \s(0) ,\o_o\>\in \BZ$ for any $o \in \bigl(\BS - I(\mu^{\diamond} )\bigr)/\<\s_0\>$ (We already use this criterion in the proof of Proposition \ref{prop:levi} (1)). Using this criterion and the explicit computation in \cite{Bourbaki}, one can easily classify the Coxeter data of all non-quasi-split groups such that $[b_{\mu,\max}]$ is $\mu-$ordinary. We use Bourbaki's notations \cite{Bourbaki}. Note that the type $\tilde{A}$ case is already established in \cite[Example~2.5]{HN17}.

\begin{proposition}\label{prop:table}
Assume that $\bG$ is non quasi-split such that $[b_{\mu,\max}]$ is $\mu$-ordinary. Then the associated Coxeter datum up to diagram automorphism is one of the following.

\begin{center}
\begin{tabular}{ |c|c|c|c|c|c|c|c|c| } 
 \hline
 Coxeter datum &  Condition on $I(\mu^{\diamond})$\\ 
 \hline
 $ (\tilde{A}_{n-1}, \Ad(\t_i), \{\mu\})$, $1 \le i \le \frac{n}{2}$ &  $\BS-I(\mu)\subseteq \{r,2r,\ldots,(d-1)r\}$, where $d = \mathrm{g.c.d}(i,n)$, $r = \frac{n}{d}$ \\ 
 \hline
 $(\tilde{A}_{n-1},  \Ad(\t_1)\circ \zeta_0, \{\mu\})$, $n$ even &  $\frac{n}{2}\in I(\mu^{\diamond})$ \\ 
  \hline
 $(\tilde{B}_{n}, \Ad(\t_1), \{\mu\})$, $n \ge 2$   &  $n\in I(\mu)$\\ 
        \hline
 $(\tilde {C}_{n}, \Ad(\t_n), \{\mu\})$, $n \ge 2$  & $\{1,3,5,\ldots\}\subseteq I(\mu)$ \\ 
  \hline
 $(\tilde {D}_{n},  \Ad(\t_1) ,\{\mu\})$, $n\ge5$ &   $\{n-1,n\} \subseteq I(\mu) $ \\ 
  \hline
 $(\tilde {D}_{n}, \Ad(\t_n) ,\{\mu\})$, $n$ odd &  $\BS-I(\mu)\subseteq \{2,4,6,\ldots,n-3\}$ \\ 
  \hline
 $(\tilde {D}_{n}, \Ad(\t_n) ,\{\mu\})$, $n=4k+2$ & $\BS-I(\mu)\subseteq \{2,4,6,\ldots,n-2,n-1\}$ \\ 
  \hline
 $(\tilde {D}_{n}, \Ad(\t_n) ,\{\mu\})$, $n=4k $ & $\BS-I(\mu)\subseteq \{2,4,6,\ldots,n-2,n\}$\\ 
  \hline
 $(\tilde {D}_{n}, \Ad(\t_n)\circ\zeta_0 ,\{\mu\})$, $n$ odd &  $\BS-I(\mu^{\diamond})\subseteq \{2,4,6,\ldots,n-3,n-1,n\}$\\ 
  \hline
 $(\tilde {D}_{n}, \Ad(\t_n)\circ\zeta_0 ,\{\mu\})$, $n$ even & $\BS-I(\mu^{\diamond})\subseteq \{2,4,6,\ldots,n-4,n-2\}$\\ 
  \hline 
 $(\tilde {E}_{6}, \Ad(\t_1) ,\{\mu\})$ & $\BS-I(\mu)\subseteq \{2,4   \}$  \\
  \hline
   $(\tilde {E}_{7},\Ad(\t_7) ,\{\mu\})$&  $\BS-I(\mu)\subseteq \{1,3,4,6   \}$  \\
  \hline
\end{tabular}
\end{center}
Here, for type $A$ and type $D$, $\zeta_0$ is the nontrivial order 2 diagram automorphism. 
\end{proposition}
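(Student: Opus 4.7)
The plan is to apply the criterion recalled just before the proposition: $[b_{\mu,\max}]$ is $\mu$-ordinary if and only if $\langle \sigma(0),\omega_o\rangle\in\BZ$ for every $o\in(\BS-I(\mu^{\diamond}))/\langle \sigma_0\rangle$. Since $I(\mu^{\diamond})$ is the $\sigma_0$-stable set of simple reflections $i$ with $\langle \mu^{\diamond},\alpha_i\rangle=0$, the proposition will follow once the set
\[
\Lambda(\sigma):=\{\,o\in\BS/\langle\sigma_0\rangle \mid \langle\sigma(0),\omega_o\rangle\notin\BZ\,\}
\]
is computed for each affine-Dynkin-plus-Frobenius pair $(\tilde\Delta,\delta)$ appearing in the table; the required condition is then exactly $\Lambda(\sigma)\subseteq I(\mu^{\diamond})/\langle\sigma_0\rangle$, or equivalently $\BS-I(\mu^{\diamond})$ has empty intersection with the preimage of $\Lambda(\sigma)$. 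Thus the task is a finite, case-by-case determination of the image of $0\in\CA$ under the Frobenius.

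First I would pin down $\sigma(0)$. Because $\sigma=\Ad(\tau_i)\circ\sigma_0$ where $\tau_i\in\Omega$ is the length-zero element corresponding to the $i$-th node of the extended Dynkin diagram (composed, for types A and D, with the involution $\zeta_0$), the image $\sigma(0)$ is the minuscule cocharacter attached to the special node $i$, translated back to the dominant chamber; this is standard, and fully tabulated in \cite{Bourbaki}. For each of the twelve rows of the table I would read off $\sigma(0)$ as an explicit rational linear combination of simple coroots, then pair it with each fundamental weight $\omega_j$, grouping the simple reflections into $\sigma_0$-orbits when $\sigma_0$ acts non-trivially.

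Next I would compute the pairings $\langle \sigma(0),\omega_o\rangle$ one type at a time. For instance in type $\tilde A_{n-1}$ with $\sigma=\Ad(\tau_i)$, the translation part of $\tau_i$ is the minuscule coweight $\omega_i^\vee=(1^{(i)},0^{(n-i)})$ shifted to the dominant chamber; the pairings with $\omega_j$ clear denominators exactly when $j$ is a multiple of $r=n/\gcd(i,n)$, yielding the first row. In type $\tilde B_n$, $\tilde C_n$, $\tilde D_n$ one reads the minuscule cocharacters directly from Bourbaki's plates: the relevant pairings are $\tfrac{1}{2}$-integral precisely at the "spin" nodes, giving the listed conditions. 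The rows involving $\zeta_0$ for types $A$ and $D$ require tracking how $\zeta_0$ permutes the simple roots and averaging over the $\sigma_0$-orbit; this is where $\omega_o=\sum_{i\in o}\omega_i$ is used. For the exceptional rows $\tilde E_6$ and $\tilde E_7$ the same method applies: one reads off $\tau_1$ (resp.\ $\tau_7$) from Bourbaki's tables of minuscule weights of $E_6$ (resp.\ $E_7$), and computes the four (resp.\ six) pairings with the fundamental weights numerically.

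The main obstacle, and really the only one, is the careful bookkeeping in the twisted cases $(\tilde D_n,\Ad(\tau_n)\circ\zeta_0)$ and $(\tilde A_{n-1},\Ad(\tau_1)\circ\zeta_0)$, where the $\sigma_0$-orbits on $\BS$ have variable size depending on the parity of $n$, and where one has to distinguish which nodes are fixed by $\zeta_0$. In these cases $\Lambda(\sigma)$ depends on $n\pmod 4$ for type $D$, producing the three sub-rows of the table; I would handle this by writing $\sigma(0)$ in coordinates, tabulating $\langle\sigma(0),\omega_i\rangle\bmod\BZ$ for $i=1,\dots,n$, then grouping according to the explicit orbits of $\zeta_0$. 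Conversely, to see that the listed condition is also sufficient, I would observe that the criterion depends only on $I(\mu^{\diamond})$ and that the condition "$\BS-I(\mu^{\diamond})\subseteq$ integral locus" is evidently equivalent to the criterion, so no further argument is needed.
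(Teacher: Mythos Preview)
Your proposal is correct and follows precisely the approach indicated in the paper: the paper does not give a detailed proof but simply states that one applies the criterion from \cite[Corollary 3.6]{HN2018} (namely, $\langle\sigma(0),\omega_o\rangle\in\BZ$ for all $o\in(\BS-I(\mu^{\diamond}))/\langle\sigma_0\rangle$) together with the explicit data in Bourbaki's tables, noting that type $\tilde A$ was already handled in \cite[Example~2.5]{HN17}. Your plan is exactly a fleshed-out version of this, including the correct identification of the delicate points (the twisted $\tilde D_n$ cases and the parity distinctions).
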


\subsection{The $({A}_{n-1},\Ad(\t_i),\{\mu\})$ case}
In this subsection, we prove the following

\begin{proposition}\label{prop:nob} Assume that the Tits datum associated with $(\bG,\{\mu\})$ is $({A}_{n-1},\Ad(\t_i),\{\mu\})$ for some $i\ne0$. Assume that $[b_{\mu,\max}]$ is $\mu$-ordinary. Then there is no $[b]<[b_{\mu,\max}]$ such that $\text{Ess-gap}([b],[b_{\mu,\max}])=0$.
\end{proposition}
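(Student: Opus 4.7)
The plan is to argue by contradiction using Theorem~\ref{thm:main} and the inner form structure of $\bG$. Suppose $[b]<[b_{\mu,\max}]$ satisfies $\text{Ess-gap}([b],[b_{\mu,\max}])=0$, and let $J\subseteq\BS$ be the minimal $\s_0$-stable subset such that $(\mu,[b])$ is Hodge--Newton decomposable with respect to $M_J$. Since $[b_{\mu,\max}]$ is $\mu$-ordinary by hypothesis, condition~(3) of Theorem~\ref{thm:main} holds; the theorem then yields condition~(2), so for any $P\in\mathfrak{P}_J^{\s}$ with semi-standard Levi decomposition $P=MN$, the Tits datum of $(M,\{\mu_P\})$ is of extended Lubin--Tate type.

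The crucial step is a structural analysis of the $F$-Levi subgroups of $\bG$. Because the Tits datum of $\bG$ is $(A_{n-1},\Ad(\t_i))$ with $i\ne 0$, we have $\bG\cong\PGL_d(D')$, where $D'$ is the central division algebra over $F$ of degree $r=n/d$ and invariant $i'/r$, with $d=\gcd(i,n)$, $i=di'$, and $\gcd(i',r)=1$. The $F$-Levi subgroups of $\bG$ containing $T$ are (modulo center) of the form $M=\prod_l\GL_{d_l}(D')$ for partitions $d=\sum_l d_l$. Each factor $\GL_{d_l}(D')$ is an inner form of $\GL_{d_lr}$; its absolute local Dynkin diagram $\tilde A_{d_lr-1}$ has a single connected component, and the inherited Frobenius acts as the rotation $\Ad(\t_{d_l i'})$, which is non-trivial because $r>1$ and $\gcd(i',r)=1$ force $d_l i'\not\equiv 0\pmod{d_l r}$. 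By the definition of extended Lubin--Tate type, any quasi-simple component on which $\mu_P$ is non-central must be of the form $\Res_{F_m/F}(A_k,\id)$, whose absolute Dynkin consists of $m$ connected components with trivial Frobenius on each. Our components are connected with non-trivial Frobenius, so $\mu_P$ must be central in every quasi-simple component of $M$.

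It remains to derive the contradiction. When $\mu_P$ is central in each quasi-simple component of $M$, the set $B(M,\{\mu_P\})$ is a singleton, whose unique element is the basic class with $M$-dominant Newton $\mu_P^{\diamond}$. By Proposition~4.8 of \cite{GHN2}, the class $[b_P]_M$ associated to $[b]$ under the Hodge--Newton decomposition has $M$-dominant Newton $z_P(\nu_b)$; combining $z_P(\nu_b)=\mu_P^{\diamond}$ with the identity $\mu_P^{\diamond}=z_P(\mu^{\diamond})$ from Lemma~\ref{lem:tauJ}(2) gives $\nu_b=\mu^{\diamond}$, so $[b]=[b_{\mu,\max}]$, contradicting $[b]<[b_{\mu,\max}]$.

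The main obstacle is the structural analysis in the middle paragraph; identifying the Frobenius twist on each Levi factor as the non-trivial rotation $\Ad(\t_{d_l i'})$ is the key point where the hypothesis $i\ne 0$ (equivalently $r>1$) enters in an essential way.
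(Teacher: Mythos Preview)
Your argument is correct and takes a genuinely different route from the paper's proof. The paper argues directly and combinatorially: after passing to shifted Newton polygons $\tilde\nu = \nu + (i/n,\ldots,i/n)$, it uses the $\mu$-ordinary criterion from Proposition~\ref{prop:table} to write $\mu = ((a_1)^{(r)},\ldots,(a_d)^{(r)})$ with $r=n/\gcd(i,n)$, observes that the only integral abscissae on $\tilde\mu$ occur at $r,2r,\ldots,(d-1)r$, and then shows by an explicit lattice-point count that any maximal $\nu<\mu$ with $\text{Ess-gap}_{[0,n]}(\tilde\nu,\tilde\mu)=0$ would have to lie strictly above some coarsened polygon $\nu_k$ yet still avoid all lattice points between $\tilde\nu_k$ and $\tilde\mu$, which is impossible.

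Your approach instead feeds the hypothesis back through Theorem~\ref{thm:main} and reduces to a structural fact about inner forms: since $\bG_{\ad}\cong\PGL_d(D')$ with $D'$ a division algebra of degree $r>1$, every $F$-Levi of $\bG$ is (mod center) a product $\prod_l\GL_{d_l}(D')$, and each $F$-quasi-simple factor $\PGL_{d_l}(D')$ has a single absolute component $\tilde A_{d_lr-1}$ on which Frobenius is the non-trivial rotation $\Ad(\tau_{d_li'})$. Such a factor can never match the shape $\Res_{F_m/F}(A_k,\id)$ required by the definition of extended Lubin--Tate type, forcing $\mu_P$ to be central on $M$ and hence $[b]=[b_{\mu,\max}]$. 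This is more conceptual and makes transparent why $i\neq 0$ (equivalently $r>1$) is the decisive hypothesis; the paper's approach, by contrast, is logically independent of Theorem~\ref{thm:main} and stays entirely within elementary Newton-polygon combinatorics.

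One minor imprecision: you describe $\Res_{F_m/F}(A_k,\id)$ as having ``$m$ connected components with trivial Frobenius on each''. In fact Frobenius permutes the $m$ components cyclically; it is only the $m$-th power that is trivial on each. Your conclusion is unaffected, since your Levi factors have a \emph{single} absolute component on which Frobenius itself acts non-trivially, ruling out both $m=1$ (which would require trivial Frobenius) and $m>1$ (which would require multiple components).
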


\begin{proof}
It suffices to consider the group $\GL_n(\breve F)$ with the usual Frobenius action twisted by $\Ad(\t_i)$. Denote $\nu_\t = (\frac{i}{n},\frac{i}{n},\ldots,\frac{i}{n})$. Then $\s(0) = \o_{i}^{\vee} = \left((1-\frac{i}{n})^{(i)},(-\frac{i}{n})^{(n-i)}\right)$. For any vector $\l\in \BQ^{n}$, set $\tilde \l = \l + \nu_\t$. We consider the Newton polygon corresponding to $\widetilde {\nu_b} $ instead of $\nu_b$ since $\widetilde{\nu_b}$ has integral coordinates vertices but $\nu_b$ does not. Let $[b_1],[b_2] $ be two $\s$-conjugacy classes with Newton points $\nu_{ 1}, \nu_{2}$ such that $[b_1]\le [b_2]$. Let $\widetilde{P_{12}}$ be the region between $\widetilde{\nu_{ 1}}$ and $\widetilde{\nu_{ 2}}$. Set 
$$\text{Ess-gap}_{[0,n]}(\widetilde{\nu_1},\widetilde{\nu_2}) := \mathbf i + \mathbf {b_1},$$ 
where $\mathbf {b_1}$ is the number of lattice points on the Newton polygon of $\widetilde{\nu_1}$ but not on the Newton polygon of $\widetilde{\nu_2}$, and $\mathbf {i}$ is the number of lattice points in the interior of $\widetilde{P_{12}}$. Then we have $\text{Ess-gap}([b_1],[b_2]) = \text{Ess-gap}_{[0,n]}(\widetilde{\nu_1},\widetilde{\nu_2})$.

Let $d=\mathrm{g.c.d}(i,n)$ and $r=\frac{n}{d}$. Applying Proposition \ref{prop:table}, we see that $\mu$ is of the form $\mu = \left((a_1)^{(r)},(a_2)^{(r)},\ldots,(a_d)^{(r)}\right)$ with $a_1\ge a_2\ge \cdots \ge a_d$. Then
$$\tilde{\mu} = \left(\left(a_1+\frac{i}{n}\right)^{(r)},\left(a_2+\frac{i}{n}\right)^{(r)},\ldots,\left(a_d+\frac{i}{n}\right)^{(r)}  \right).$$
Since $r$ is the minimal integer such that $n$ divides $ir$, we have
\begin{align*}\tag{a}
\<\tilde{\mu},\o_j \>\in\BZ \text{ if and only if } j\in \{r,2r,\ldots,(d-1)r\},    
\end{align*}
where $\o_{j}=((1)^{(j)},(0)^{(n-j)})$ is the fundamental weight.

Let $k\in \{1,2,\ldots,d-1\}$ such that $a_k>a_{k+1}$. Define $\nu_k =\left((c_1)^{(r)},(c_2)^{(r)},\ldots,(c_d)^{(r)}\right)$, such that $c_i = a_i$ for $i\notin\{ k,k+1\}$ and $c_k=c_{k+1} = \frac{1}{2}(a_k+a_{k+1})$. Then $\nu_k<\mu$. Direct computation shows that 
$$\<\widetilde{\mu}-\widetilde{\nu_k},\o_{rk}\> = \<\widetilde{\mu}-\widetilde{\nu_k},\o_{rk}\> = \frac{r(a_k-a_{k+1})}{2} \ge 1.$$
Using (a), we get $\text{Ess-gap}_{[0,n]}(\widetilde{\nu_k},\widetilde{\mu})\ge1$. 

Let $\nu$ be a maximal element such that $\nu<\mu$ and $\text{Ess-gap}_{[0,n]}(\widetilde{\nu},\widetilde{\mu})=0$. Then $\nu>\nu_k$ for some $k$ with $a_k>a_{k+1}$. Therefore, $\widetilde{\nu}$ must contain some lattice point that lies between $\widetilde{\nu_k}$ and $\widetilde{\mu}$. This contradicts to $\text{Ess-gap}_{[0,n]}(\widetilde{\nu},\widetilde{\mu})=0$. This completes the proof.
\end{proof}

\printbibliography

\end{document}